\titleformat{\section}{\filcenter\sc\large}{\thesection.\;}{0em}{}
\titleformat{\subsection}[runin]{\bf}{\thesubsection.\;}{0em}{}[.]
\newcommand{\R}{\mathbbm R}
\newcommand{\Q}{\mathbbm Q}
\newcommand{\Z}{\mathbbm Z}
\newcommand{\Rinf}{\mathbbm R\cup\{\infty\}}
\newcommand{\N}{\mathbbm N}
\renewcommand{\L}[1]{\mathscr L^{#1}}
\renewcommand{\d}{\,\mathrm{d}}
\renewcommand{\c}{^\mathrm{c}}
\newcommand{\J}{\mathcal J^p_f}
\newcommand{\p}{\,\mathrm p^p_M}
\newcommand{\pp}[2]{\,\mathrm p^{#1}_{#2}}
\renewcommand{\mid}{\,:\,}
\let\orgtimes\times
\renewcommand{\times}{\!\orgtimes\!}
\let\orgsetminus\setminus
\renewcommand{\setminus}{\!\orgsetminus\!}
\newcommand{\lsC}{sequential lower semi-continuity}
\newtheorem{theorem}{Theorem}
\newtheorem{lemma}[theorem]{Lemma}
\newtheorem{proposition}[theorem]{Proposition}
\newtheorem{corollary}[theorem]{Corollary}
\theoremstyle{definition}
\newtheorem{definition}[theorem]{Definition}
\theoremstyle{remark}
\newtheorem{example}[theorem]{Example}
\title{Sequential Lower Semi-Continuity of Non-Local Functionals}
\author{Peter Elbau\thanks{Johann Radon Institute for Computational and Applied Mathematics (RICAM), Linz, Austria, e-mail: peter.elbau@ricam.oeaw.ac.at}}
\date{}
\begin{document}
\maketitle
\thispagestyle{empty}

\begin{abstract}
We give a characterisation for non-local functionals 
\[ \mathcal J:L^p(X;\R^n)\to\R\cup\{\infty\},\quad\mathcal J(u) = \int_X\int_Xf(x,y,u(x),u(y))\d x\d y \]
on Lebesgue spaces to be weakly sequentially lower semi-continuous. Essentially, the requirement is that the functions
\[ \Phi_{x,\psi}:\R^n\to\R,\quad \Phi_{x,\psi}(w)=\int_Xf(x,y,w,\psi(y))\d y \]
are for every $\psi\in L^p(X;\R^n)$ for almost all $x\in X$ convex.

Moreover, we show that this condition is in the case $n=1$ (up to some equivalence in the integrand $f$) equivalent to the separate convexity of the function $(w,z)\mapsto f(x,y,w,z)$ for almost all $(x,y)\in X\times X$. 
\end{abstract}

\section{Introduction}
The purpose of these notes is to study the properties of non-local functionals of the form
\begin{equation}\label{eqNonLocalFunctional}
\mathcal J:L^p(X;\R^n)\to\R\cup\{\infty\},\quad\mathcal J(u)=\int_X\int_Xf(x,y,u(x),u(y))\d x\d y,
\end{equation}
especially regarding the existence of minimising points. 

Such kind of functionals recently appeared in a derivative-free characterisation of the Sobolev and the total variation seminorm~\cite{BourgainBrezisMironescu2002,Ponce2004}. More precisely, it was shown that these seminorms can be written as the limit of a sequence of non-local functionals which essentially emerge from replacing the derivative in the seminorm by a difference quotient. As an application of this result, it became possible to reformulate variational problems such as e.g.\ the total variation regularisation for image denoising~\cite{RudinOsherFatemi1992} by approximating the seminorm therein with the corresponding non-local functional, thus leading to variational problems for non-local functionals~\cite{AubertKornprobst2009,PontowScherzer2009}.

As another example where such non-local variational problems arose, we mention the variational formulation of neighbourhood filters~\cite{KindermannOsherJones2005}. In this case, the non-locality of the functional was utilised to measure and, by minimising the functional, also to enforce similarities of different regions in an image. For an overview of non-local functionals recently introduced in image analysis, we refer to~\cite{BoulangerElbauPontow2010}.

The main interest of this paper is the existence of minimising points of such functionals. Following the direct method in the calculus of variations, the existence can be guaranteed by imposing the condition that $\mathcal J$ is coercive, meaning that $\mathcal J(u)\to\infty$ whenever $\|u\|_p\to\infty$, and that $\mathcal J$ is sequentially lower semi-continuous with respect to the weak topology on $L^p(X;\R^n)$ if $p\in(1,\infty)$ and with respect to the weak-star topology on $L^\infty(X;\R^n)$ if $p=\infty$. Our aim is therefore to find a good characterisation for the sequential lower semi-continuity of a non-local functional.

Before doing so, we take in Section~\ref{seIntegrability} a closer look at the conditions which we need to ensure that the function $(x,y)\mapsto f(x,y,u(x),u(y))$ is integrable for all functions $u\in L^p(X;\R^n)$. It turns out that we can reach integrability with a weaker condition than the natural estimate of the form
\[ |f(x,y,w,z)|\le \alpha(x,y)+\beta(x)|z|^p+\beta(y)|w|^p+C|w|^p|z|^p \]
with $\alpha\in L^1(X\times X)$, $\beta\in L^1(X)$, $C\in\R$, unlike in the case of local functionals (i.e.\ functionals of the form $\mathcal J_{\mathrm{local}}:L^p(X;\R^n)\to\R$, $\mathcal J_{\mathrm{local}}(u)=\int_Xf_{\mathrm{local}}(x,u(x))\d x$) where this sort of bound is equivalent to the integrability, see e.g.\ Theorem~$6.45$ in~\cite{FonsecaLeoni2007}.

Then we turn to the sequential lower semi-continuity of non-local functionals. We show in Section~\ref{seStrongSemiContinuity} that the functional $\mathcal J$ is sequentially lower semi-continuous with respect to the strong topology if (in addition to some lower bound for the function $f$) the map $(w,z)\mapsto f(x,y,w,z)$ is for almost all $(x,y)\in X\times X$ lower semi-continuous. Afterwards, we establish in Section~\ref{seWeakSemiContinuity} the result that a non-local functional, fulfilling some regularity assumptions, is sequentially lower semi-continuous with respect to the weak topology on $L^p(X;\R^n)$ if $p\in(1,\infty)$ and with respect to the weak-star topology on $L^\infty(X;\R^n)$ if $p=\infty$ if and only if the function
\[ \Phi_{x,\psi}:\R^n\to\R,\quad \Phi_{x,\psi}(w)=\int_Xf(x,y,w,\psi(y))\d y \]
is for every $\psi\in L^p(X;\R^n)$ for almost all $x\in X$ convex.

Finally, we discuss in Section~\ref{seEquivalentIntegrands} which functions $f$ lead to the same non-local functional. We make use of this ambiguity in the integrand to show that in the case $n=1$ the condition that $\Phi_{x,\psi}$ is for every $\psi\in L^p(X)$ for almost all $x\in X$ convex is (for sufficiently regular functions $f$) equivalent to the fact that there exists a function $\tilde f$, defining the same non-local functional as $f$, such that the map $(w,z)\mapsto\tilde f(x,y,w,z)$ is for almost all $(x,y)\in X\times X$ separately convex.

The first results in this direction (formulated on Sobolev instead of Lebesgue spaces) are going back to Pablo Pedregal~\cite{Pedregal1997} where he gave an equivalent characterisation of the sequential lower semi-continuity in terms of Jensen type inequalities for the integrand. In later papers~\cite{Munoz2000,BevanPedregal2006,Munoz2009}, it was further shown that at least in the homogeneous case, i.e.\ if the integrand is not explicitly depending on the variables $x$ and $y$, the functional is sequentially lower semi-continuous if and only if the integrand is separately convex.

\section{Definition of Non-Local Functionals}
Throughout the paper, let $X$ be a bounded, Lebesgue measurable subset of $\R^m$, $m,n\in\N$, and $p\in[1,\infty]$. We consider $X$ as the measure space defined by the Lebesgue measure $\L m$ on the $\sigma$-algebra of all Lebesgue measurable subsets of $X$. Moreover, we consider $\R^N$ for every $N\in\N$ as the measure space defined by the Lebesgue measure $\L N$ on the Borel $\sigma$-algebra of $\R^N$.

Let us now clarify what we mean by a non-local functional. 
Since we can interchange the order of integration in~\eqref{eqNonLocalFunctional}, we may restrict our attention to functions $f$ which are pairwise symmetric.
\begin{definition}
We call a function $f:X\times X\times\R^n\times\R^n\to\R$ pairwise symmetric if
\begin{equation}\label{eqSymmetry}
f(x,y,w,z) = f(y,x,z,w)\quad\text{for all}\quad x,y\in X,\quad w,z\in\R^n.
\end{equation}
\end{definition}

\begin{definition}\label{deNonLocalFunctional}
Let $f:X\times X\times\R^n\times\R^n\to\R$ be a pairwise symmetric, measurable function (with respect to the product $\sigma$-algebra of the chosen $\sigma$-algebras on $X$ and $\R^n$)
whose negative part $f^-$ fulfils that
\begin{equation}\label{eqIntegrabilityNegativePart}
\int_X\int_Xf^-(x,y,u(x),u(y))\d x\d y < \infty\quad\text{for all}\quad u\in L^p(X;\R^n).
\end{equation}
Then we call
\[ \J:L^p(X;\R^n)\to\Rinf,\quad\J(u)=\int_X\int_Xf(x,y,u(x),u(y))\d x\d y \]
the non-local functional on $L^p(X;\R^n)$ defined by the function $f$.
\end{definition}

The pairwise symmetry of the integrand $f$ is just introduced for convenience since a function $\tilde f$ and its symmetrisation $f$, 
\[ f(x,y,w,z)=\frac12(\tilde f(x,y,w,z)+\tilde f(y,x,z,w)), \]
would anyway define the same non-local functional.

We remark that the measurability of the function $f$ guarantees that also the composition $f\circ g_{\varphi,\psi}$ of $f$ and the measurable function
\[ g_{\varphi,\psi}:X\times X\to X\times X\times\R^n\times\R^n,\quad g_{\varphi,\psi}(x,y)=(x,y,\varphi(x),\psi(y)) \]
is for all measurable functions $\varphi,\psi:X\to\R^n$ again measurable, so that the integrals in Definition~\ref{deNonLocalFunctional} are well-defined.

In the following, we will try to characterise the functions $f$ which fulfil the condition~\eqref{eqIntegrabilityNegativePart}. To formulate the results for the values $p\in[1,\infty)$ and for $p=\infty$ in the same way, let us introduce the function $\pp{q}{M}:\R^n\to[0,\infty]$, $q\in[0,\infty]$, $M\in(0,\infty)$, by
\begin{equation}\label{eqFunctionP}
\pp{q}{M}(w) = |w|^q\quad\text{if}\;q\in[0,\infty)\quad\text{and}\quad\pp{\infty}{M}(w)=\begin{cases}\infty&\text{if}\;|w|>M,\\0&\text{if}\;|w|\le M,\end{cases}
\end{equation}
which has the nice property that
\begin{equation}\label{eqPropertyOfP}
\int_X\p(\varphi(x))\d x = \p(\|\varphi\|_p)
\end{equation}
for every $M\in(0,\infty)$ and $\varphi\in L^p(X;\R^n)$.

\section{Integrability Conditions}\label{seIntegrability}
We are looking for a criterion for a measurable function $f:X\times X\times\R^n\times\R^n\to[0,\infty)$ to fulfil the condition
\begin{equation}\label{eqIntegrability}
\int_X\int_Xf(x,y,u(x),u(y))\d x\d y < \infty\quad\text{for every}\quad u\in L^p(X;\R^n).
\end{equation}
Since the fact that we use in the two last components of the integrand $f$ the same function~$u$ is only relevant when the values $x$ and $y$ are close to each other, we may try to consider instead of~\eqref{eqIntegrability} the stronger condition where we impose integrability also for different functions~$\varphi,\psi\in L^p(X;\R^n)$ in these two components.

\begin{proposition}\label{thIntegrability}
Let $f:X\times X\times\R^n\times\R^n\to[0,\infty)$ be a measurable function.

Then the following statements are equivalent:
\begin{enumerate}
\item
The function $f$ fulfils
\[ \int_X\int_Xf(x,y,u(x),u(y))\d x\d y < \infty\quad\text{for all}\quad u\in L^p(X;\R^n). \]
\item
The function $f$ fulfils
\[ \int_X\int_Xf(x,y,\varphi(x),\psi(y))\d x\d y<\infty\quad\text{for all}\quad\varphi,\psi\in L^p(X;\R^n). \]
\item
There exists for every $\psi\in L^p(X;\R^n)$ and every $M\in(0,\infty)$ a function $\alpha_{M,\psi}\in L^1(X)$ and a constant $C_\psi\in(0,\infty)$ such that
\begin{equation}\label{eqIntegrabilityCondition}
\int_Xf(x,y,w,\psi(y))\d y\le\alpha_{M,\psi}(x)+C_\psi\p(w)
\end{equation}
for almost all $x\in X$ and all $w\in\R^n$.
\end{enumerate}
\end{proposition}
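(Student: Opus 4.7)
The plan is to establish the cycle $(iii)\Rightarrow(ii)\Rightarrow(i)\Rightarrow(iii)$, of which the first two implications are routine. For $(iii)\Rightarrow(ii)$, I would substitute $w=\varphi(x)$ in the pointwise bound of (iii) and integrate over $X$ by Tonelli, using the identity~\eqref{eqPropertyOfP}, to obtain $\int_X\int_Xf(x,y,\varphi(x),\psi(y))\d y\d x\le\|\alpha_{M,\psi}\|_1+C_\psi\,\pp{p}{M}(\|\varphi\|_p)<\infty$ (picking any $M$ when $p<\infty$ and $M\ge\|\varphi\|_\infty$ when $p=\infty$). The implication $(ii)\Rightarrow(i)$ is immediate from setting $\varphi=\psi=u$.

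For the hard direction $(i)\Rightarrow(iii)$, I would argue by contradiction. Fix $\psi\in L^p(X;\R^n)$ and $M\in(0,\infty)$, and introduce the non-negative, jointly measurable function
\[
g\colon X\times\R^n\to[0,\infty],\qquad g(x,w):=\int_X f(x,y,w,\psi(y))\d y
\]
(Tonelli). Suppose (iii) fails: then for every $k\in\N$, the set $A_k:=\{(x,w)\in X\times\R^n\mid g(x,w)>k+k\pp{p}{M}(w)\}$ is measurable and has positive projection onto $X$. A measurable-selection argument (Jankov--von Neumann) yields a positive-measure set $E_k\subset X$ and a measurable $w_k\colon E_k\to\R^n$ satisfying $g(x,w_k(x))>k+k\pp{p}{M}(w_k(x))$ on $E_k$, and by further restricting $E_k$ one may arrange $|w_k|\le W_k$ on $E_k$ for some $W_k<\infty$ (with $W_k\le M$ when $p=\infty$).

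Choosing the $E_k$ pairwise disjoint and of sufficiently small measure, define
\[
u(x):=\begin{cases}w_k(x),&x\in E_k,\\ \psi(x),&x\in X\setminus\bigcup_k E_k.\end{cases}
\]
A judicious choice of $|E_k|$ ensures both $u\in L^p(X;\R^n)$ and divergence of the series that we are about to bound from below. Using the non-negativity of $f$,
\[
\int_X\int_X f(x,y,u(x),u(y))\d y\d x\ge\sum_k\int_{E_k}\int_{X\setminus\bigcup_j E_j}f(x,y,w_k(x),\psi(y))\d y\d x,
\]
and absolute continuity of the $L^1$-densities $y\mapsto f(x,y,w_k(x),\psi(y))$ (whose integrals are $g(x,w_k(x))<\infty$ on $E_k$) allows, after shrinking the $|E_j|$'s, an estimate of the inner integral by $\tfrac12 g(x,w_k(x))>k/2$ on each $E_k$. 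A balancing of the form $|E_k|\sim 1/(k\log^2 k)$ keeps $u\in L^p$ while forcing $\sum_k k|E_k|=\infty$, so that the full double integral is $+\infty$, contradicting~$(i)$.

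The main obstacle is the construction in the last step: coordinating the measurable selection of $w_k$, the truncation levels $W_k$, and the sizes $|E_k|$ to simultaneously ensure $u\in L^p$, the divergence of the ``main'' series $\sum_k k|E_k|$, and the absolute-continuity control of the ``diagonal correction'' $\int_{\bigcup_j E_j}f(x,y,w_k(x),\psi(y))\d y$. The possibility that $g(x,w_k(x))=\infty$ on a positive-measure subset of $E_k$ needs to be handled separately (where the argument simplifies, since even a single $k$ yields a contradiction), and the case $p=\infty$ requires adapting the estimates to the fact that $\pp{\infty}{M}$ takes only the values $0$ and $\infty$, but the overall scheme is unchanged.
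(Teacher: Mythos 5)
Your cycle $(iii)\Rightarrow(ii)\Rightarrow(i)$ matches the paper's easy steps, but your strategy for the hard direction is genuinely different and, as sketched, has a gap. The paper proves $(i)\Rightarrow(ii)$ directly via a geometric decomposition — Lemma~\ref{thCheckerboardApproximation} shows that any set $E\subset X\times X$ contains a set of the form $(S\times S\c)\cap E$ covering nearly a quarter of $E$ — and then gets $(ii)\Rightarrow(iii)$ for free by citing the characterisation of finiteness for local functionals (Theorems~6.45/6.47 in Fonseca--Leoni). You instead try to prove $(i)\Rightarrow(iii)$ in one step by pasting bad values $w_k$ on small disjoint sets $E_k$ against $\psi$ on the complement; this essentially re-derives the local-functional argument while simultaneously fighting the off-diagonal structure peculiar to the non-local setting, and that is where things break.

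The main gap is your claim that the case $g(x,w_k(x))=\infty$ on a positive-measure set $F$ "simplifies" because a single $k$ gives a contradiction. It does not: it is precisely the hard case. Taking $u=w_k$ on $F$ and $u=\psi$ on $X\setminus F$, the lower bound you control is
\[
\int_F\int_{X\setminus F}f(x,y,w_k(x),\psi(y))\d y\d x,
\]
and there is no reason this is infinite. You know $\int_F\int_Xf(x,y,w_k(x),\psi(y))\d y\d x=\infty$, but the divergence could live entirely near the diagonal, i.e., $\int_{F'}\int_{F'}f(x,y,w_k(x),\psi(y))\d y\d x=\infty$ while $\int_{F'}\int_{X\setminus F'}f(x,y,w_k(x),\psi(y))\d y\d x<\infty$ for every positive-measure $F'\subset F$. (The terms $\int_F\int_Ff(x,y,u(x),u(y))\d y\d x$ that $(i)$ lets you control involve $f(x,y,w_k(x),w_k(y))$, not $f(x,y,w_k(x),\psi(y))$, so $(i)$ does not help here.) Under your standing hypothesis $(i)$ alone you cannot exclude this — ruling it out already amounts to $(i)\Rightarrow(ii)$, which is exactly what the paper's checkerboard construction is for. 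Without that lemma or a substitute for it, your argument stalls precisely at the point the paper identifies as nontrivial.

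A secondary but real issue is the coordination in the "main" case $g(x,w_k(x))<\infty$. Your absolute-continuity bound $\int_{\bigcup_j E_j}f(x,y,w_k(x),\psi(y))\d y<\tfrac12 g(x,w_k(x))$ must hold uniformly on each $E_k$, which via Egorov gives a modulus $\delta_k$ that may shrink as $k\to\infty$. You need $\sum_j|E_j|<\delta_k$ for every $k$; you can shrink $E_j$ for $j>k$ after $\delta_k$ is known, but $|E_1|,\dots,|E_{k-1}|$ are already committed, and there is no a priori reason that $\delta_k$ exceeds their sum. A diagonalization over a sparse subsequence of $k$'s could perhaps save this, but it fights directly against the divergence $\sum_k k|E_k|=\infty$ you also need, and the proposal does not resolve this tension.
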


\begin{proof}
We start with the implication from (i) to (ii). 
Let us assume that we find two functions $\varphi,\psi\in L^p(X;\R^n)$ such that the measurable function $g:X\times X\to[0,\infty)$, $g(x,y)=f(x,y,\varphi(x),\psi(y))$ fulfils
\[ \int_X\int_Xg(x,y)\d x\d y = \infty. \]
To prove the implication, it is enough to construct a function $u\in L^p(X;\R^n)$ with
\begin{equation}\label{eqDivergencyAtU}
\int_X\int_Xf(x,y,u(x),u(y))\d x\d y = \infty.
\end{equation}

If there exists a measurable set $A\subset X$ with $\int_A\int_{A\c}g(x,y)\d y\d x=\infty$, $A\c=X\setminus A$, we can simply choose $u=\chi_A\varphi+\chi_{A\c}\psi$, to get~\eqref{eqDivergencyAtU}. 
Otherwise, if no such set $A$ exists, we find a decreasing sequence $(A_k)_{k\in\N}$ of measurable sets $A_k\subset X$, $k\in\N$, such that $A_{k+1}\subset A_k$, $\L m(A_{k+1})=\frac12\L m(A_k)$, and 
\[ \int_{A_k}\int_{A_k}g(x,y)\d x\d y = \infty\quad\text{for all}\quad k\in\N. \]
We next choose a subsequence $(A_{k_\ell})_{\ell\in\N}$ of $(A_k)_{k\in\N}$ such that the pairwise disjoint sets $\tilde A_\ell=A_{k_\ell}\setminus A_{k_{\ell+1}}$, $\ell\in\N$, fulfil
\[ \int_{\tilde A_\ell}\int_{\tilde A_\ell}g(x,y)\d x\d y\ge\frac1\ell\quad\text{for all}\quad\ell\in\N. \]
We further divide for every $\ell\in\N$ the set $\tilde A_\ell\times\tilde A_\ell$ into the measurable sets
\begin{equation}\label{eqDefinitionOfEjl}
E_{j,\ell}=\{(x,y)\in\tilde A_\ell\times\tilde A_\ell\mid g(x,y)\in[j-1,j)\},\quad j\in\N.
\end{equation}
Then we find for every $\ell\in\N$ a constant $N_\ell\in\N$ such that
\begin{equation}\label{eqChoiceOfNl}
\sum_{j=1}^{N_\ell}j\L m(E_{j,\ell}) \ge \frac1{2\ell}.
\end{equation}

Using now Lemma~\ref{thCheckerboardApproximation}, we get for every $\ell\in\N$ a $\delta_\ell\in(0,\infty)$ such that the checkerboard pattern $S_{\delta_\ell}\subset\R^m$ defined in~\eqref{eqCheckerboardTiling} fulfils
\begin{equation}\label{eqCheckerboardEstimate}
\L{2m}((S_{\delta_\ell}\times S_{\delta_\ell}\c)\cap E_{j,\ell})\ge\frac18\L{2m}(E_{j,\ell})
\end{equation}
for all $j\in[1,N_\ell]\cap\N$.

Setting finally $S = \bigsqcup_{\ell\in\N}(S_{\delta_\ell}\cap\tilde A_\ell)\subset X$ and $u = \chi_S\varphi+\chi_{S\c}\psi$, where $S\c=X\setminus S$, we get~\eqref{eqDivergencyAtU}. Indeed, since $S\c\supset\bigsqcup_{\ell\in\N}(S_{\delta_\ell}\c\cap\tilde A_\ell)$, we have
\[ \int_X\int_X f(x,y,u(x),u(y))\d x\d y \ge\int_S\int_{S\c}g(x,y)\d y\d x\ge\sum_{\ell=1}^\infty\int_{S_{\delta_\ell}\cap\tilde A_\ell}\int_{S_{\delta_\ell}\c\cap\tilde A_\ell}g(x,y)\d y\d x. \]
Using then $\tilde A_\ell\times\tilde A_\ell\supset\bigsqcup_{j=1}^{N_\ell}E_{j,\ell}$, we get by definition~\eqref{eqDefinitionOfEjl} of the sets~$E_{j,\ell}$ that
\begin{align*}
\sum_{\ell=1}^\infty\int_{S_{\delta_\ell}\cap\tilde A_\ell}\int_{S_{\delta_\ell}\c\cap\tilde A_\ell}g(x,y)\d y\d x
&\ge\sum_{\ell=1}^\infty\sum_{j=1}^{N_\ell}\iint_{(S_{\delta_\ell}\times S_{\delta_\ell}\c)\cap E_{j,\ell}}g(x,y)\d x\d y \\
&\ge\sum_{\ell=1}^\infty\sum_{j=1}^{N_\ell}(j-1)\L{2m}((S_{\delta_\ell}\times S_{\delta_\ell}\c)\cap E_{j,\ell}).
\end{align*}
Taking now the estimate~\eqref{eqCheckerboardEstimate} into account, we find with our choice~\eqref{eqChoiceOfNl} of $N_\ell$ that
\begin{align*}
\sum_{\ell=1}^\infty\sum_{j=1}^{N_\ell}(j-1)\L{2m}((S_{\delta_\ell}\times S_{\delta_\ell}\c)\cap E_{j,\ell})
&\ge\frac18\sum_{\ell=1}^\infty\sum_{j=1}^{N_\ell}(j-1)\L{2m}(E_{j,\ell}) \\
&\ge\frac18\left(\sum_{\ell=1}^\infty\frac1{2\ell}-\L m(X)^2\right) = \infty.
\end{align*}
This concludes the proof that (i) implies~(ii). The converse direction is trivial.

\medskip

Given condition (ii), we can use the result for local functionals. Indeed, we know that
\[ \int_XF_\psi(x,\varphi(x))\d x<\infty  \]
for all $\varphi\in L^p(X;\R^n)$, where the function $F_\psi$ is for all $\psi\in L^p(X;\R^n)$ defined by
\[ F_\psi:X\times\R^n\to\Rinf,\quad F_\psi(x,w)=\int_Xf(x,y,w,\psi(y))\d y. \]
Therefore, in the case $p\in[1,\infty)$, there exist for every $\psi\in L^p(X;\R^n)$ a function $\alpha_\psi\in L^1(X)$ and a constant $C_\psi\in(0,\infty)$ such that
\begin{equation}\label{eqIntegrabilityCondition1}
F_\psi(x,w)\le\alpha_\psi(x)+C_\psi|w|^p
\end{equation}
for almost all $x\in X$ and all $w\in\R^n$, see e.g.\ Theorem~$6.45$ in~\cite{FonsecaLeoni2007}.
If $p=\infty$, we find for every $\psi\in L^\infty(X;\R^n)$ and every constant $M\in(0,\infty)$ a function $\alpha_{M,\psi}\in L^1(X)$ such that
\begin{equation}\label{eqIntegrabilityCondition2}
F_\psi(x,w)\le\alpha_{M,\psi}(x)
\end{equation}
for almost all $x\in X$ and all $w\in\R^n$ with $|w|\le M$, see e.g.\ Theorem~$6.47$ in~\cite{FonsecaLeoni2007}.
Using the function $\p$ defined in~\eqref{eqFunctionP}, the conditions~\eqref{eqIntegrabilityCondition1} and~\eqref{eqIntegrabilityCondition2} can be written in the condensed form~\eqref{eqIntegrabilityCondition}.

\medskip

The implication from (iii) to (ii) is finally found by direct calculation. From condition~\eqref{eqIntegrabilityCondition}, we get with the property~\eqref{eqPropertyOfP} of the function $\p$ that 
\[ \int_X\int_Xf(x,y,\varphi(x),\psi(y))\d y\d x\le \|\alpha_{M,\psi}\|_1+C_\psi\p(\|\varphi\|_p)<\infty \]
for all $\varphi,\psi\in L^p(X;\R^n)$ if we choose in the case $p=\infty$ the constant $M\ge\|\varphi\|_\infty$.
\end{proof}

We remark that condition~\eqref{eqIntegrabilityCondition} allows for non-integrable divergencies in the function $(x,y)\mapsto\sup_{w,z\in B}f(x,y,w,z)$ even for bounded sets $B\subset\R^n$, unlike in the study of local functionals where such a behaviour does not get along with the finiteness of the functional. Since we do not want to deal with such divergencies, we give here additionally a stronger condition on $f$.

\begin{definition}\label{defPBounded}
If the function $f:X\times X\times\R^n\times\R^n\to\R$ fulfils that there exist for every $M\in(0,\infty)$ a constant $C\in(0,\infty)$ and positive functions $\alpha_M\in L^1(X\times X)$ and $\beta_M\in L^1(X)$ with
\begin{equation}\label{eqIntegrabilityConditionSimplified}
|f(x,y,w,z)|\le\alpha_M(x,y)+\beta_M(x)\p(z)+\beta_M(y)\p(w)+C\p(w)\p(z)
\end{equation}
for almost all $(x,y)\in X\times X$ and all $w,z\in\R^n$, then we call $f$ a $p$-bounded function.
\end{definition}

\begin{corollary}\label{thIntegrabilityConditionSimplified}
Every $p$-bounded, measurable function $f:X\times X\times\R^n\times\R^n\to[0,\infty)$ fulfils
\begin{equation}\label{eqIntegrability1}
\int_X\int_Xf(x,y,u(x),u(y))\d x\d y < \infty\quad\text{for all}\quad u\in L^p(X;\R^n).
\end{equation}
\end{corollary}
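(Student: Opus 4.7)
The plan is to prove this directly by integrating the pointwise bound \eqref{eqIntegrabilityConditionSimplified} against $u(x), u(y)$ and applying Fubini's theorem, rather than routing through Proposition~\ref{thIntegrability}. For a given $u \in L^p(X;\R^n)$, I will fix $M \in (0,\infty)$ arbitrary when $p < \infty$, and choose $M \ge \|u\|_\infty$ when $p = \infty$; this choice is the reason the function $\p$ was introduced in \eqref{eqFunctionP}, and it is the only delicate point in the argument.

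With this choice of $M$, substituting $w = u(x)$ and $z = u(y)$ in \eqref{eqIntegrabilityConditionSimplified} and integrating over $X \times X$ yields four terms. The $\alpha_M$ term contributes $\|\alpha_M\|_{L^1(X\times X)} < \infty$. The two mixed terms, by Fubini, are of the form $\|\beta_M\|_1 \int_X \p(u(y))\,\d y$ (and its symmetric counterpart), and the last term is $C\bigl(\int_X \p(u(x))\,\d x\bigr)^2$. Invoking the key identity \eqref{eqPropertyOfP}, each factor $\int_X \p(u(\cdot))$ equals $\p(\|u\|_p)$, which is $\|u\|_p^p$ for $p<\infty$ and, in the case $p=\infty$, is $0$ thanks to the choice $M \ge \|u\|_\infty$.

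Collecting the estimates gives
\[
\int_X\!\int_X f(x,y,u(x),u(y))\,\d x\,\d y \le \|\alpha_M\|_{L^1(X\times X)} + 2\|\beta_M\|_1\,\p(\|u\|_p) + C\,\p(\|u\|_p)^2,
\]
which is finite, establishing \eqref{eqIntegrability1}. The only obstacle worth mentioning is ensuring that in the $p=\infty$ case the quantities $\p(u(y))$ and $\p(\|u\|_p)$ are finite (indeed zero) almost everywhere; this is precisely secured by the freedom to pick $M$ depending on $u$ in the statement of Definition~\ref{defPBounded}. No appeal to the checkerboard construction from Proposition~\ref{thIntegrability} is needed here, since the $p$-boundedness condition already provides a pointwise majorant whose integrability is manifest.
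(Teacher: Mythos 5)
Your proof is correct and essentially coincides with the paper's own argument: both integrate the pointwise $p$-boundedness estimate~\eqref{eqIntegrabilityConditionSimplified} over $X\times X$ and invoke the identity~\eqref{eqPropertyOfP} for $\p$, choosing $M\ge\|u\|_\infty$ in the case $p=\infty$, to arrive at the same bound $\|\alpha_M\|_1+2\|\beta_M\|_1\p(\|u\|_p)+C\p(\|u\|_p)^2<\infty$.
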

\begin{proof}
We may directly verify condition~\eqref{eqIntegrability1}. Since the function $f$ is $p$-bounded, we find for every $M\in(0,\infty)$ a constant $C\in(0,\infty)$ and positive functions $\alpha_M\in L^1(X\times X)$ and $\beta_M\in L^1(X)$ such that~\eqref{eqIntegrabilityConditionSimplified} holds. Then we get with the property~\eqref{eqPropertyOfP} of the function $\p$ for every $u\in L^p(X;\R^n)$ that
\[ \int_X\int_Xf(x,y,u(x),u(y))\d x\d y\le\|\alpha_M\|_1+2\|\beta_M\|_1\p(\|u\|_p)+C\p(\|u\|_p)^2<\infty \]
if we choose in the case $p=\infty$ the constant $M$ greater than $\|u\|_\infty$. 
\end{proof}

To illustrate what kind of functions we are excluding with this stronger condition, let us construct an example of a function $f$ which is not $p$-bounded, but fulfils the integrability condition~\eqref{eqIntegrability1}.
\begin{example}\label{expDivergencies}
Let $X=(0,1)$ and choose $n=1$. We define the function $f$ by
\[ f:X\times X\times\R\times\R\to[0,\infty),\quad f(x,y,w,z)=\begin{cases}\frac1z&\text{if}\quad z\in[x,1],\\0&\text{otherwise.}\end{cases} \]
Then we have 
\begin{equation}\label{eqDivergentFunctional}
\int_X\int_Xf(x,y,\varphi(x),\psi(y))\d x\d y = \int_{\psi^{-1}((0,1])}\int_0^{\psi(y)}\frac1{\psi(y)}\d x\d y = \L1\big(\psi^{-1}((0,1])\big)
\end{equation}
for all functions $\varphi,\psi\in L^p(X)$. In particular, $f$ fulfils condition~\eqref{eqIntegrability1}.

On the other hand, $f$ cannot be $p$-bounded, since e.g. 
\[ \sup_{z\in[-1,1]}f(x,y,w,z)=\frac1x\quad\text{for all}\quad x,y\in X,\;w\in\R, \] 
which as a function of $x\in X$ is not integrable as an estimate of the form~\eqref{eqIntegrabilityConditionSimplified} would require.
\end{example}

However, this construction only works if the function $f$ depends on the variables $x$ and~$y$. Otherwise, every divergency of $f$ at finite values of $w$ and~$z$ would lead to a non-integrable divergency of $(x,y)\mapsto f(x,y,u(x),u(y))$ for some function $u\in L^p(X;\R^n)$. In fact, if $f$ does not explicitly depend on $x$ and~$y$, then the $p$-boundedness of~$f$ is equivalent to the integrability condition~\eqref{eqIntegrability1}.

\begin{proposition}
Let $f:\R^n\times\R^n\to[0,\infty)$ be a measurable function. Then we have
\begin{equation}\label{eqIntegrabilityHomogeneousCase}
\int_X\int_Xf(u(x),u(y))\d x\d y < \infty\quad\text{for all}\quad u\in L^p(X;\R^n)
\end{equation}
if and only if there exists for every $M\in(0,\infty)$ a constant $C_M\in(0,\infty)$ such that
\begin{equation}\label{eqIntegrabilityConditionHomogeneousCase}
f(w,z)\le C_M(1+\p(w))(1+\p(z))\quad\text{for all}\quad w,z\in\R^n.
\end{equation}
\end{proposition}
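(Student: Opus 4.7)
The plan is to prove both implications using the results already available, leaning on Corollary~\ref{thIntegrabilityConditionSimplified} for the easy direction and constructing an explicit pathological $u\in L^p(X;\R^n)$ for the converse.

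For the sufficiency of~\eqref{eqIntegrabilityConditionHomogeneousCase}, I will simply observe that the bound $f(w,z)\le C_M(1+\p(w))(1+\p(z))=C_M+C_M\p(w)+C_M\p(z)+C_M\p(w)\p(z)$ is exactly the $p$-boundedness condition~\eqref{eqIntegrabilityConditionSimplified} with the (constant) choices $\alpha_M\equiv C_M$, $\beta_M\equiv C_M$; these are integrable because $X$ has finite Lebesgue measure. Corollary~\ref{thIntegrabilityConditionSimplified} then gives~\eqref{eqIntegrabilityHomogeneousCase} directly.

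For the necessity, I will argue by contraposition. If~\eqref{eqIntegrabilityConditionHomogeneousCase} fails, then there exists $M\in(0,\infty)$ for which one can find a sequence $((w_k,z_k))_{k\in\N}$ in $\R^n\times\R^n$ such that the ratio
\[ r_k:=\frac{f(w_k,z_k)}{(1+\p(w_k))(1+\p(z_k))} \]
tends to infinity; in the case $p=\infty$ this sequence can additionally be chosen inside the closed ball of radius $M$ (because $(1+\pp{\infty}{M}(w))(1+\pp{\infty}{M}(z))=1$ when $|w|,|z|\le M$ and is $\infty$ otherwise). After passing to a subsequence I may assume $r_k\ge 4^k$. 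The idea is to build $u$ as a countable superposition of characteristic functions on pairwise disjoint pieces of $X$ on which $u$ takes the values $w_k$ or $z_k$. Setting
\[ a_k=\frac{r_k^{-1/2}}{1+\p(w_k)},\quad b_k=\frac{r_k^{-1/2}}{1+\p(z_k)}, \]
I get $a_k,b_k\le 2^{-k}$, so after a further harmless rescaling the pairwise disjoint measurable sets $A_k,B_k\subset X$ with $\L m(A_k)=a_k$ and $\L m(B_k)=b_k$ fit into $X$. Defining $u=\sum_k(w_k\chi_{A_k}+z_k\chi_{B_k})$, the property~\eqref{eqPropertyOfP} of $\p$ gives $\int_X\p(u)\d x\le 2\sum_k r_k^{-1/2}<\infty$ (in the $p=\infty$ case the bound $\|u\|_\infty\le M$ is automatic), while
\[ \int_X\int_Xf(u(x),u(y))\d x\d y\ge\sum_{k=1}^\infty f(w_k,z_k)\,a_kb_k=\sum_{k=1}^\infty 1=\infty, \]
contradicting~\eqref{eqIntegrabilityHomogeneousCase}.

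The only genuine subtlety I foresee is the uniform treatment of the cases $p<\infty$ and $p=\infty$: in the former the radius $M$ actually drops out of~\eqref{eqIntegrabilityConditionHomogeneousCase} (so one may take $C_M$ independent of $M$), while in the latter the assertion reduces to local boundedness of $f$. Phrasing the bad sequence in terms of the ratio $r_k$ above with the function $\p$ unifies both situations, so no serious obstacle remains.
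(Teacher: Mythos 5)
Your proposal is correct and takes essentially the same route as the paper: the easy direction via Corollary~\ref{thIntegrabilityConditionSimplified}, and the converse by contraposition, picking a sequence $(w_k,z_k)$ with ratio $r_k\ge 4^k$ and building $u$ as a superposition of characteristic functions on disjoint sets whose measures are scaled by $(1+\p(w_k))^{-1}$ and $(1+\p(z_k))^{-1}$. The only cosmetic difference is that the paper normalizes the set measures by $\L m(X)/2^{k+1}$ (so that they fit in $X$ automatically and sum to at most $\L m(X)$) rather than by $r_k^{-1/2}$ plus an unspecified rescaling, and explicitly invokes nonatomicity of Lebesgue measure for the existence of the disjoint sets.
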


\begin{proof}
If $f$ fulfils the condition~\eqref{eqIntegrabilityConditionHomogeneousCase}, then~\eqref{eqIntegrabilityHomogeneousCase} holds by Corollary~\ref{thIntegrabilityConditionSimplified}.

For the other direction, we assume that for some $M\in(0,\infty)$, there does not exist a constant $C_M$ such that condition~\eqref{eqIntegrabilityConditionHomogeneousCase} holds. Then we find a sequence $\big((w_k,z_k)\big)_{k=1}^\infty\subset\R^n\times\R^n$ with 
\[ \frac{f(w_k,z_k)}{(1+\p(w_k))(1+\p(z_k))}\ge 2^{2k+2}\quad\text{for all}\quad k\in\N. \]
We choose pairwise disjoint subsets $E_k,F_k\subset X$, $k\in\N$, with
\[ \L m(E_k)=\frac{\L m(X)}{2^{k+1}(1+\p(w_k))}\quad\text{and}\quad\L m(F_k)=\frac{\L m(X)}{2^{k+1}(1+\p(z_k))}. \]
Such subsets exist since $\sum_{k=1}^\infty(\L m(E_k)+\L m(F_k))\le\L m(X)$ and since the Lebesgue measure is nonatomic, see e.g.~Corollary $1.21$ in~\cite{FonsecaLeoni2007}.

We now define the function
\[ u:X\to\R^n,\quad u(x)=\sum_{k=1}^\infty(w_k\chi_{E_k}(x)+z_k\chi_{F_k}(x)). \]
Then $u\in L^p(X;\R^n)$, since we have for $p\in[1,\infty)$
\[ \|u\|_p^p = \sum_{k=1}^\infty\big(|w_k|^p\L m(E_k)+|z_k|^p\L m(F_k)\big)\le\sum_{k=1}^\infty\frac{\L m(X)}{2^k}=\L m(X) \]
and for $p=\infty$ 
\[ \|u\|_\infty = \max\Big\{\sup_{k\in\N}|w_k|,\sup_{k\in\N}|z_k|\Big\}\le M. \]
Moreover, we find that
\begin{align*}
\int_X\int_Xf(u(x),u(y))\d x\d y&\ge\sum_{k=1}^\infty f(w_k,z_k)\L m(E_k)\L m(F_k) \\
&=\sum_{k=1}^\infty\frac{\L m(X)^2f(w_k,z_k)}{2^{2k+2}(1+\p(w_k))(1+\p(z_k))}=\infty.
\end{align*}
Thus, $f$ does not fulfil condition~\eqref{eqIntegrabilityHomogeneousCase}.
\end{proof}

\section{Strong Sequential Lower Semi-Continuity}\label{seStrongSemiContinuity}
Before we analyse the sequential lower semi-continuity of non-local functionals with respect to the weak or to the weak-star topology on $L^p(X;\R^n)$, we shortly give a criterion for the sequential lower semi-continuity with respect to the norm topology. To start with, let us briefly recall the definition of sequential lower semi-continuity.
\begin{definition}
Let $V$ be a topological space. Then a functional $\mathcal J:V\to\Rinf$ is called sequentially lower semi-continuous if we have for every sequence $(u_k)_{k\in\N}\subset V$ converging to $u\in V$ that
\[ \liminf_{k\to\infty}\mathcal J(u_k)\ge\mathcal J(u). \]
\end{definition}

Similar to the case of local functionals 
\[ \mathcal J_{\mathrm{local}}:L^p(X;\R^n)\to\Rinf,\quad\mathcal J_{\mathrm{local}}(u)=\int_Xf_{\mathrm{local}}(x,u(x))\d x, \]
where the sequential lower semi-continuity of the functional $\mathcal J_{\mathrm{local}}$ is equivalent to the lower semi-continuity of the function $\R^n\to\R$, $w\mapsto f_{\mathrm{local}}(x,w)$ for almost all $x\in X$, see e.g.\ Theorem~$5.9$ in~\cite{FonsecaLeoni2007}, the lower semi-continuity of the function $\R^n\times\R^n\to\R$, $(w,z)\mapsto f(x,y,w,z)$ is sufficient to guarantee the sequential lower semi-continuity of the non-local functional $\J$ provided the negative part of $f$ is additionally $p$-bounded. In the local case, this kind of lower bound was already necessary for the functional $\mathcal J_{\mathrm{local}}$ to be well-defined.

To simplify the notation, we define for every function $f:X\times X\times\R^n\times\R^n\to\R$ and every $(x,y)\in X\times X$ the function $f_{(x,y)}:\R^n\times\R^n\to\R$ by
\begin{equation}\label{eqfxy}
f_{(x,y)}(w,z)=f(x,y,w,z)\quad\text{for all}\quad w,z\in\R^n.
\end{equation}

\begin{proposition}\label{thStronglySemiContinuous}
Let $f:X\times X\times\R^n\times\R^n\to\R$ be a pairwise symmetric, measurable function whose negative part is $p$-bounded.

Then the non-local functional $\J$ on $L^p(X;\R^n)$ defined by the function $f$ is sequentially lower semi-continuous with respect to the strong topology on $L^p(X;\R^n)$ if the function $f_{(x,y)}$ is for almost all $(x,y)\in X\times X$ lower semi-continuous.
\end{proposition}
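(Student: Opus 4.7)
The plan is to apply Fatou's lemma to the integrands $(x,y)\mapsto f(x,y,u_k(x),u_k(y))$, using the pointwise lower semi-continuity of $f_{(x,y)}$ together with an integrable minorant provided by the $p$-boundedness of $f^-$. Given a strongly convergent sequence $u_k\to u$ in $L^p(X;\R^n)$, I would first reduce to a subsequence on which $\liminf_k\J(u_k)$ is attained as a limit, and then, by the standard extraction argument for strong $L^p$ convergence, pass to a further subsequence (still denoted $u_k$) such that $u_k(x)\to u(x)$ for almost every $x\in X$ and such that there exists $g\in L^p(X)$ with $|u_k(x)|\le g(x)$ almost everywhere and for every $k\in\N$. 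In the case $p=\infty$ this envelope $g$ can simply be chosen as the constant $\sup_k\|u_k\|_\infty$, which is finite since strong convergence in $L^\infty$ is essentially uniform.

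By Fubini's theorem the exceptional null sets on $X$ transfer to a null set on $X\times X$, so $(u_k(x),u_k(y))\to(u(x),u(y))$ for almost every $(x,y)\in X\times X$, and the hypothesis that $f_{(x,y)}$ is \lsc{} yields
\[ \liminf_{k\to\infty}f(x,y,u_k(x),u_k(y))\ge f(x,y,u(x),u(y))\quad\text{for almost every }(x,y)\in X\times X. \]
To obtain a $k$-uniform integrable minorant, choose $M\in(0,\infty)$ large enough that $g(x)\le M$ almost everywhere (any $M$ works if $p<\infty$). The $p$-boundedness of $f^-$ furnishes $\alpha_M\in L^1(X\times X)$, $\beta_M\in L^1(X)$, and $C\in(0,\infty)$ such that
\[ f^-(x,y,u_k(x),u_k(y))\le\alpha_M(x,y)+\beta_M(x)\p(u_k(y))+\beta_M(y)\p(u_k(x))+C\p(u_k(x))\p(u_k(y)); \]
since $\p$ is monotone in $|w|$ on the ball of radius $M$, the bound $|u_k|\le g\le M$ allows us to replace $u_k$ by $g$ on the right-hand side to obtain the $k$-uniform majorant
\[ \alpha_M(x,y)+\beta_M(x)\p(g(y))+\beta_M(y)\p(g(x))+C\p(g(x))\p(g(y)), \]
which lies in $L^1(X\times X)$ by Fubini together with $\alpha_M\in L^1(X\times X)$, $\beta_M\in L^1(X)$ and $\p(g)\in L^1(X)$. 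Applying Fatou's lemma after adding this majorant to the integrands then gives $\liminf_k\J(u_k)\ge\J(u)$, which is the claim.

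The main technical care I anticipate lies in the joint bookkeeping: producing a single subsequence that simultaneously achieves a.e.\ convergence on $X$ and admits a fixed envelope $g\in L^p(X)$, and coordinating the choice of $M$ with $g$ in the case $p=\infty$ (where $\pp{\infty}{M}$ takes the value $\infty$ outside the ball of radius $M$, so that $\|g\|_\infty\le M$ is essential for the majorant to be finite). Once these preparations are in place, the rest of the argument is a direct invocation of Fatou together with the pointwise lower semi-continuity of $f_{(x,y)}$.
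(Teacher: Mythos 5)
Your argument is correct and follows the same overall strategy as the paper: pass to a subsequence realising the $\liminf$ that also converges pointwise a.e., use the $p$-boundedness of $f^-$ to produce an integrable lower bound, apply Fatou, and exploit the lower semi-continuity of $f_{(x,y)}$ pointwise. The place where you diverge is in how the integrable lower bound is obtained. You extract a further subsequence dominated by an $L^p$ envelope $g$ (the standard Riesz--Fischer refinement of strong $L^p$ convergence) and then substitute $g$ for $u_k$ in the $p$-boundedness estimate, producing a single $k$-independent majorant
\[
\alpha_M(x,y)+\beta_M(x)\p(g(y))+\beta_M(y)\p(g(x))+C\p(g(x))\p(g(y))\in L^1(X\times X)
\]
against which Fatou is applied directly. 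The paper instead keeps the correction terms $\ell$-dependent, applies Fatou to the nonnegative function
\[
f(x,y,u_{k_\ell}(x),u_{k_\ell}(y))+\alpha_M(x,y)+\beta_M(x)\p(u_{k_\ell}(y))+\beta_M(y)\p(u_{k_\ell}(x))+C\p(u_{k_\ell}(x))\p(u_{k_\ell}(y)),
\]
and uses the identity $\int_X\p(\varphi(x))\d x=\p(\|\varphi\|_p)$ together with the strong convergence $\|u_{k_\ell}\|_p\to\|u\|_p$ so that the integrated correction converges to the same constant on both sides of the inequality and cancels. Your envelope route is slightly more elementary and requires no norm-convergence bookkeeping, at the cost of one extra subsequence extraction; the paper's route avoids the extraction but leans on the norm identity for $\p$ and the continuity of $t\mapsto t^p$ (respectively the choice of $M$ in the case $p=\infty$). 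Both are valid proofs of the proposition.
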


\begin{proof}
Let $(u_k)_{k\in\N}\subset L^p(X;\R^n)$ be a sequence converging to $u\in L^p(X;\R^n)$. We choose a subsequence $(u_{k_\ell})_{\ell\in\N}$ of $(u_k)_{k\in\N}$ such that
\[ \lim_{\ell\to\infty}\J(u_{k_\ell}) = \liminf_{k\to\infty}\J(u_k) \]
holds and such that we have
\[ \lim_{\ell\to\infty}u_{k_\ell}(x) = u(x)\quad\text{for almost all}\quad x\in X. \]

To be able to apply Fatou's lemma, we use that the negative part of $f$ is $p$-bounded. We thus find for every $M\in(0,\infty)$ a constant $C\in(0,\infty)$ and positive functions $\alpha_M\in L^1(X\times X)$ and $\beta_M\in L^1(X)$ such that
\[ f(x,y,w,z)+\alpha_M(x,y)+\beta_M(x)\p(z)+\beta_M(y)\p(w)+C\p(w)\p(z) \ge 0 \]
for almost all $(x,y)\in X\times X$ and all $w,z\in\R^n$.
We then choose the constant $M$ in the case $p=\infty$ greater than $\sup_{\ell\in\N}\|u_{k_\ell}\|_\infty$ and find with the lower semi-continuity of $f_{(x,y)}$ for almost all $(x,y)\in X\times X$ that
\begin{align*}
&\lim_{\ell\to\infty}\int_X\int_Xf(x,y,u_{k_\ell}(x),u_{k_\ell}(y))\d x\d y+\|\alpha_M\|_1+2\|\beta_M\|_1\p(\|u\|_p)+C\p(\|u\|_p)^2 \\
&\qquad\ge\int_X\int_X\liminf_{\ell\to\infty}\big(f(x,y,u_{k_\ell}(x),u_{k_\ell}(y))+\alpha_M(x,y)+\beta_M(x)\p(u_{k_\ell}(y)) \\
&\qquad\qquad\qquad\qquad+\beta_M(y)\p(u_{k_\ell}(x))+C\p(u_{k_\ell}(x))\p(u_{k_\ell}(y))\big)\d x\d y \\
&\qquad\ge\int_X\int_Xf(x,y,u(x),u(y))\d x\d y+\|\alpha_M\|_1+2\|\beta_M\|_1\p(\|u\|_p)+C\p(\|u\|_p)^2.
\end{align*}
Thus, $\liminf_{k\to\infty}\J(u_k)\ge\J(u)$, and we conclude that $\J$ is sequentially lower semi-continuous with respect to the strong topology on $L^p(X;\R^n)$.
\end{proof}

We remark that without the lower bound on the function $f$, the lower semi-continuity of the function $f_{(x,y)}$ for almost all $(x,y)\in X\times X$ does not imply the sequential lower semi-continuity of the non-local functional $\J$.
\begin{example}
Similar to Example~\ref{expDivergencies}, we choose $X=(0,1)$ and $n=1$ and define the map
\[ \tilde f:X\times\R\to\R,\quad\tilde f(x,z)=\begin{cases}-\frac1z&\text{if}\quad z\in[x,1],\\\phantom-0&\text{otherwise.}\end{cases} \]
Then the function
\[ f:X\times X\times\R\times\R\to\R,\quad f(x,y,w,z)=\tfrac12(\tilde f(x,z)+\tilde f(y,w)) \]
has a negative part which is not $p$-bounded as was shown in Example~\ref{expDivergencies}, but $f_{(x,y)}$ is for all $x,y\in X$ lower semi-continuous.

On the other hand, we find from~\eqref{eqDivergentFunctional} for the non-local functional $\J$ defined by the function $f$ that
\[ \J(u) = -\L1\big(u^{-1}((0,1])\big). \]
So, for the sequence $(u_k)_{k\in\N}\subset L^p(X;\R^n)$ defined by $u_k(x)=\frac1k$ for all $x\in X$, $k\in\N$, we get that $(u_k)_{k\in\N}$ converges uniformly to the zero function, but
\[ \liminf_{k\to\infty}\J(u_k) = -1 < 0 = \J(0). \]
Thus, $\J$ is not sequentially lower semi-continuous with respect to the strong topology on $L^p(X;\R^n)$.
\end{example}

\section{Weak Sequential Lower Semi-Continuity}\label{seWeakSemiContinuity}
After all the preparations, we are now ready to study the sequential lower semi-continuity of non-local functionals with respect to the weak topology on $L^p(X;\R^n)$ for $p\in[1,\infty)$ and with respect to the weak-star topology on $L^\infty(X;\R^n)$ for $p=\infty$.

\begin{theorem}\label{thCharacterisationOfWeakSlsc}
Let $f:X\times X\times\R^n\times\R^n\to\R$ be a pairwise symmetric, measurable function whose negative part is $p$-bounded.
Moreover, we assume that the function $f_{(x,y)}$, defined by~\eqref{eqfxy}, is continuous for almost all $(x,y)\in X\times X$ and that there exist for every $M\in(0,\infty)$ positive functions $\alpha_M\in L^1(X\times X)$ and $\beta_M\in L^1(X)$ such that
\begin{equation}\label{eqUpperBound}
f(x,y,w,z)\le\alpha_M(x,y)+\beta_M(x)\p(z)
\end{equation}
for almost all $(x,y)\in X\times X$ and all $w,z\in\R^n$ with $|w|\le M$.

Then the non-local functional $\J$ on $L^p(X;\R^n)$ defined by the function $f$ is sequentially lower semi-continuous with respect to the weak topology on $L^p(X;\R^n)$ for $p\in[1,\infty)$ and with respect to the weak-star topology on $L^\infty(X;\R^n)$ for $p=\infty$ if and only if the function
\begin{equation}\label{eqDefinitionOfPhi}
\Phi_{x,\psi}:\R^n\to\R,\quad\Phi_{x,\psi}(w)=\int_Xf(x,y,w,\psi(y))\d y
\end{equation}
is for every $\psi\in L^p(X;\R^n)$ for almost all $x\in X$ convex.
\end{theorem}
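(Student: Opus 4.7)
The plan is to prove the two implications separately, since they rely on rather different techniques. For \emph{necessity}, assuming $\J$ is \lsc{} with respect to the relevant weak topology, I fix $\psi\in L^p(X;\R^n)$, $w_0,w_1\in\R^n$, $t\in(0,1)\cap\Q$, and a base point $x_0\in X$. Writing $A_r=B_r(x_0)\cap X$ and using a checkerboard-type construction in the spirit of Lemma~\ref{thCheckerboardApproximation}, I build a sequence $(u_k)\subset L^p(X;\R^n)$ with $u_k=\psi$ on $A_r\c$ and $u_k$ oscillating on $A_r$ between $w_0$ on sets of density~$t$ and $w_1$ on sets of density~$1-t$, so that $u_k\rightharpoonup u$ with $u=\psi$ on $A_r\c$ and $u\equiv w:=tw_0+(1-t)w_1$ on $A_r$. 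Using pairwise symmetry I split $\J$ into the $A_r\c\times A_r\c$ piece (independent of the modification), the diagonal piece on $A_r\times A_r$, and a cross-term $2\int_{A_r}\int_{A_r\c}f(x,y,v(x),\psi(y))\d y\d x$. The diagonal is negligible after division by $|A_r|$ by~\eqref{eqUpperBound} and the $p$-boundedness of $f^-$, and the cross-term passes in the limit $k\to\infty$ to $2\int_{A_r}\int_{A_r\c}[tf(x,y,w_0,\psi(y))+(1-t)f(x,y,w_1,\psi(y))]\d y\d x$ (by weak-star convergence of the characteristic functions of $\{u_k=w_0\}\cap A_r$ to $t\chi_{A_r}$, tested against the $L^1$-integrand supplied by~\eqref{eqUpperBound}). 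Applying the weak \lsC, dividing by $|A_r|$, letting $r\to 0$ so that $A_r\c\to X$, and finally invoking Lebesgue differentiation delivers
\[ \Phi_{x_0,\psi}(w)\le t\,\Phi_{x_0,\psi}(w_0)+(1-t)\,\Phi_{x_0,\psi}(w_1) \]
at a.e.\ $x_0$; the extension to all $(w_0,w_1,t)\in(\R^n)^2\times(0,1)$ is by countable density plus continuity of $f_{(x,y)}$ in the third slot.

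For \emph{sufficiency}, let $u_k\rightharpoonup u$ realise $\liminf_k\J(u_k)$, and after a subsequence let $(u_k)$ generate a Young measure $(\nu_x)_{x\in X}$ with $u(x)=\int w\d\nu_x(w)$. The product Young-measure theorem for the map $(x,y)\mapsto(u_k(x),u_k(y))$, combined with the lower bound provided by the $p$-boundedness of $f^-$, yields
\[ \liminf_{k\to\infty}\J(u_k)\ge\int_X\int_X F(x,y)\d x\d y,\quad F(x,y):=\int\int f(x,y,w,z)\d\nu_x(w)\d\nu_y(z). \]
The idea is to bring this lower bound down to $\J(u)$ by two applications of Jensen's inequality. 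First, picking a measurable disintegration $\nu_y=\mathrm{law}(\psi_\omega(y))$ with $\psi_\omega\in L^p(X;\R^n)$ over a probability space $(\Omega,P)$, Fubini rewrites
\[ \Phi^\infty_x(w):=\int_X\int f(x,y,w,z)\d\nu_y(z)\d y=\int_\Omega\Phi_{x,\psi_\omega}(w)\d P(\omega), \]
which is convex in $w$ as an average of functions convex by hypothesis. Jensen against $\d\nu_x$ then gives $\int_X\int_X F(x,y)\d x\d y\ge\int_X\Phi^\infty_x(u(x))\d x$. Second, I rewrite
\[ \int_X\Phi^\infty_x(u(x))\d x=\int_X\int\tilde G_y(z)\d\nu_y(z)\d y,\quad\tilde G_y(z):=\int_X f(x,y,u(x),z)\d x, \]
and observe that pairwise symmetry identifies $\tilde G_y(z)=\Phi_{y,u}(z)$, which is convex in $z$ by hypothesis. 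A second Jensen against $\d\nu_y$ produces $\int\tilde G_y(z)\d\nu_y(z)\ge\tilde G_y(u(y))$, and integrating over $y$ yields $\int_X\Phi^\infty_x(u(x))\d x\ge\J(u)$, completing the argument.

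The main obstacle I expect lies in the Young-measure technicalities: verifying the product representation for $(u_k(x),u_k(y))$, which requires equi-integrability of $f^-(x,y,u_k(x),u_k(y))$ (provided by the $p$-boundedness of $f^-$ and the uniform bound on $(u_k)$, with special care in the case $p=\infty$ via~\eqref{eqUpperBound}); constructing a measurable disintegration $\omega\mapsto\psi_\omega\in L^p(X;\R^n)$ via a quantile/selection argument with the required moment bound on the Young measure; and invoking Fubini to turn the hypothesis ``for every $\psi$, a.e.\ $x$'' into a statement valid simultaneously for $P$-a.e.\ $\omega$ and a.e.\ $x$, so as to legitimise the convexity of $\Phi^\infty_x$. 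The analogous technicality on the necessity side is the uniform-in-$k$ smallness of $\int_{A_r}\int_{A_r}f(x,y,u_k(x),u_k(y))\d x\d y$ relative to $|A_r|$, which is controlled by~\eqref{eqUpperBound} evaluated at Lebesgue points of the relevant integrands.
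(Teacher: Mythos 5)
Your overall architecture matches the paper's: on the necessity side, oscillating test sequences localised on small sets plus a Lebesgue-point argument; on the sufficiency side, a Young-measure representation followed by two applications of Jensen's inequality exploiting the pairwise symmetry. Two points deserve comment.

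\textbf{A genuine gap in the necessity direction.} You fix $\psi\in L^p(X;\R^n)$ and split $\J(u_k)$ into the $A_r\c\times A_r\c$ piece, the diagonal piece and the cross term, relying on the $A_r\c\times A_r\c$ piece being \emph{the same} on both sides of the \lsC{} inequality and hence cancelling. But when $p\in[1,\infty)$ and $\psi$ is unbounded, one may have $\J(\psi)=\infty$, in which case $\int_{A_r\c}\int_{A_r\c}f(x,y,\psi(x),\psi(y))\d y\d x$ is typically $+\infty$ for small $r$, the inequality $\liminf_k\J(u_k)\ge\J(u)$ degenerates to $\infty\ge\infty$, and you cannot subtract the common term. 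The hypotheses do not exclude this: the upper bound~\eqref{eqUpperBound} controls $f$ only where one argument is bounded by $M$, and the $p$-boundedness of $f^-$ only controls the negative part. The paper treats this case separately by first proving convexity of $\Phi_{x,\psi}$ for bounded $\psi$, choosing bounded $\psi_k\to\psi$ strongly in $L^p$ and pointwise a.e., and using that $v\mapsto\Phi_{x,v}(w)$ is strongly continuous (a consequence of the continuity of $f_{(x,y)}$, the $p$-boundedness of $f^-$ and the bound~\eqref{eqUpperBound}, cf.\ the argument of Proposition~\ref{thStronglySemiContinuous}) to pass to the limit $\Phi_{x,\psi_k}(w)\to\Phi_{x,\psi}(w)$ and carry the convexity over. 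You need a comparable step.

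\textbf{A genuinely different (and heavier) route in the sufficiency direction.} You obtain the convexity of $\Phi^\infty_x=\tilde\Phi_{x,\nu}$ by disintegrating $\nu$ as a mixture $\nu_y=\mathrm{law}(\omega\mapsto\psi_\omega(y))$ over a probability space $(\Omega,P)$ and writing $\Phi^\infty_x(w)=\int_\Omega\Phi_{x,\psi_\omega}(w)\d P(\omega)$. This works in principle, but it forces you to construct a jointly measurable family $(y,\omega)\mapsto\psi_\omega(y)$, prove $\psi_\omega\in L^p(X;\R^n)$ for $P$-a.e.\ $\omega$ via a moment bound on $\nu$, justify Fubini in the identity above using~\eqref{eqUpperBound} and the $p$-boundedness of $f^-$, and then combine the ``for a.e.\ $x$'' null sets over an uncountable family of $\psi_\omega$ by another Fubini argument after checking measurability of the convexity defect. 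The paper avoids all of this: it observes that condition~\eqref{eqUpperBound} gives uniform integrability of $y\mapsto f(x,y,w,u_{k_\ell}(y))$, so that by the fundamental theorem for Young measures $\tilde\Phi_{x,\nu}(w)=\lim_\ell\Phi_{x,u_{k_\ell}}(w)$ for a.e.\ $x$ and all $w$; a pointwise limit of the convex functions $\Phi_{x,u_{k_\ell}}$ (convex for a.e.\ $x$ by the hypothesis applied to $\psi=u_{k_\ell}$, with a single null set after intersecting over $\ell$) is convex. That route applies the hypothesis only to the countably many $u_{k_\ell}$, uses no disintegration, and reuses exactly the ingredient~\eqref{eqUpperBound} you would anyway need for your Fubini step; it is worth adopting.

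The rest of your argument (the checkerboard/oscillation construction with $\chi_{E_{\vartheta,k}}\rightharpoonup^*\vartheta$, the control of the diagonal $A_r\times A_r$ piece via the $L^1$ envelope $g_M$ given by~\eqref{eqUpperBound} and $p$-boundedness, the passage to the cross-term limit against the $L^1$ integrand, the product Young measure $\nu\otimes\nu$, equi-integrability of $f^-$ for the fundamental theorem, and the two Jensen inequalities using $\int_{\R^n}w\d\nu_x(w)=u(x)$ and the pairwise symmetry) is in line with the paper's proof. The paper reaches the pointwise convexity via Lemma~\ref{thFromIntegralToPointwise} (a measurable-selection argument over an integral inequality on small sets $A\subset E_M'$) rather than via Lebesgue differentiation at fixed $x_0$ plus a countable-density-and-continuity extension; either works, but note that your diagonal estimate $\int_{A_r\times A_r}g_M=o(\L m(A_r))$ for a.e.\ $x_0$ requires a short Egorov-plus-density (or iterated Lebesgue point) justification rather than being immediate.
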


\begin{proof}
We first show that the function $\Phi_{x,\psi}$ is for every $\psi\in L^p(X;\R^n)$ for almost all $x\in X$ convex if $\J$ is sequentially lower semi-continuous. 

So, let $\psi\in L^p(X;\R^n)$. To begin with, we assume that $\J(\psi)<\infty$. 
Since the Lebesgue measure is nonatomic, we find for every $\vartheta\in[0,1]$ a sequence $(E_{\vartheta,k})_{k\in\N}$ of subsets of $X$ such that the characteristic functions $\chi_{E_{\vartheta,k}}\in L^\infty(X)$, $k\in\N$, converge weakly-star in $L^\infty(X)$ to the constant function $\vartheta$, see e.g.\ Proposition~$2.87$ in~\cite{FonsecaLeoni2007}. 
Then we define for arbitrary functions $\omega_1,\omega_2\in L^\infty(X;\R^n)$, $\vartheta\in[0,1]$, and an arbitrary measurable subset $A\subset X$ the functions $u_k\in L^p(X;\R^n)$,
\[ u_k(x) = \chi_A(x)\varphi_k(x)+\chi_{A\c}(x)\psi(x),\quad k\in\N, \]
where we use the notation $A\c=X\setminus A$ and where the functions $\varphi_k\in L^\infty(X;\R^n)$ are given by
\[ \varphi_k(x)=\chi_{E_{\vartheta,k}}(x)\,\omega_1(x)+(1-\chi_{E_{\vartheta,k}}(x))\,\omega_2(x),\quad k\in\N. \]

The sequence $(u_k)_{k\in\N}$ thus converges weakly in $L^p(X;\R^n)$ if $p\in[1,\infty)$ and weakly-star in $L^\infty(X;\R^n)$ if $p=\infty$ to the function $u\in L^p(X;\R^n)$ defined by
\[ u(x) = \chi_A(x)\bar\omega(x)+\chi_{A\c}(x)\psi(x) \]
where $\bar\omega\in L^\infty(X;\R^n)$ is the convex combination $\bar\omega=\vartheta\omega_1+(1-\vartheta)\omega_2$ of $\omega_1$ and $\omega_2$. So, the sequential lower semi-continuity of $\J$ implies that $\liminf_{k\to\infty}\J(u_k)\ge\J(u)$. We therefore get for all measurable sets $A\subset X$ the inequality
\begin{multline}\label{eqInsertingCharacteristicFunctions}
\liminf_{k\to\infty}\left(\int_A\int_Af(x,y,\varphi_k(x),\varphi_k(y))\d y\d x+2\int_A\int_{A\c}f(x,y,\varphi_k(x),\psi(y))\d y\d x\right) \\
\ge\int_A\int_Af(x,y,\bar\omega(x),\bar\omega(y))\d y\d x+2\int_A\int_{A\c}f(x,y,\bar\omega(x),\psi(y))\d y\d x.
\end{multline}

To get rid of the integrals over $A\times A$, we will now consider the limit where the measure of $A$ tends to zero. 
We remark that because of the $p$-boundedness of $f^-$ and the upper bound~\eqref{eqUpperBound} of $f$, there exists for every $M\in(0,\infty)$ a function $g_M\in L^1(X\times X)$ such that we have
\[ |f(x,y,\omega(x),\tilde\omega(y))|\le g_M(x,y)\quad\text{and}\quad |f(x,y,\omega(x),\psi(y))|\le g_M(x,y) \]
for all $\omega,\tilde\omega\in L^\infty(X;\R^n)$ with $\|\omega\|_\infty\le M$, $\|\tilde\omega\|_\infty\le M$ and almost all $(x,y)\in X\times X$.
In particular, we have with $M\ge\max\{\|\omega_1\|_\infty,\|\omega_2\|_\infty\}$ that
\begin{gather*}
|f(x,y,\varphi_k(x),\varphi_k(y))|\le\max_{i,j\in\{1,2\}}|f(x,y,\omega_i(x),\omega_j(y))|\le g_M(x,y), \\
|f(x,y,\bar\omega(x),\bar\omega(y))|\le g_M(x,y)
\end{gather*}
for almost all $(x,y)\in X\times X$ and all $k\in\N$.
To get a bound for the integrals over $A\times A$, we choose for every $\varepsilon\in(0,\infty)$ and every measurable set $E\subset X$ with positive measure a set $E_M'\subset E$ with positive measure such that
\[ \int_A\int_Ag_M(x,y)\d x\d y\le\varepsilon\L m(A). \]
for all measurable sets $A\subset E_M'$.
We then get from inequality~\eqref{eqInsertingCharacteristicFunctions} that
\begin{equation}\label{eqIntermediateStepToConvexity}
\liminf_{k\to\infty}\int_A\int_{A\c}f(x,y,\varphi_k(x),\psi(y))\d y\d x+\varepsilon\L m(A)\ge\int_A\int_{A\c}f(x,y,\bar\omega(x),\psi(y))\d y\d x
\end{equation}
for all measurable sets $A\subset E_M'$.

By definition of the functions $\varphi_k$, we find for all measurable sets $A\subset E_M'$ and all $k\in\N$ that
\begin{multline*}
\int_A\int_{A\c}f(x,y,\varphi_k(x),\psi(y))\d y\d x = \int_A\chi_{E_{\vartheta,k}}(x)\int_{A\c}f(x,y,\omega_1(x),\psi(y))\d y\d x \\
+\int_A(1-\chi_{E_{\vartheta,k}}(x))\int_{A\c}f(x,y,\omega_2(x),\psi(y))\d y\d x.
\end{multline*}
Since the functions $\chi_{E_{\vartheta,k}}$ converge for $k\to\infty$ by definition of the sets $E_{\vartheta,k}$ weakly-star in $L^\infty(X)$ to the constant function $\vartheta$, we further get that
\begin{multline}\label{eqLiminfEvaluated}
\liminf_{k\to\infty}\int_A\int_{A\c}f(x,y,\varphi_k(x),\psi(y))\d y\d x = \vartheta\int_A\int_{A\c}(f(x,y,\omega_1(x),\psi(y))\d y\d x\\
+(1-\vartheta)\int_A\int_{A\c}f(x,y,\omega_2(x),\psi(y))\d y\d x
\end{multline}
for all measurable sets $A\subset E_M'$. 

By our choice of the function $g_M$ and the set $E_M'$ we have that
\[ \int_A\int_A|f(x,y,\omega(x),\psi(y))|\d y\d x\le\varepsilon\L m(A) \]
for all measurable sets $A\subset E_M'$ and all functions $\omega\in L^\infty(X;\R^n)$ with $\|\omega\|_\infty\le M$.
Therefore, we get by plugging~\eqref{eqLiminfEvaluated} into~\eqref{eqIntermediateStepToConvexity} the inequality
\[ \int_A\big(\vartheta\Phi_{x,\psi}(\omega_1(x))+(1-\vartheta)\Phi_{x,\psi}(\omega_2(x))\big)\d x+3\varepsilon\L m(A)\ge\int_A\Phi_{x,\psi}(\bar\omega(x))\d x \]
for all measurable sets $A\subset E_M'$. Since this holds for every $M\in(0,\infty)$ for all functions $\omega_1,\omega_2\in L^\infty(X;\R^n)$ with $\|\omega_1\|_\infty\le M$ and $\|\omega_2\|_\infty\le M$, we can use Lemma~\ref{thFromIntegralToPointwise} and finally get after letting $\varepsilon$ tend to zero that
\[ \vartheta\Phi_{x,\psi}(w_1)+(1-\vartheta)\Phi_{x,\psi}(w_2) \ge \Phi_{x,\psi}(\vartheta w_1+(1-\vartheta)w_2) \]
for almost all $x\in X$, all $\vartheta\in[0,1]$, and all $w_1,w_2\in\R^n$, which proves the convexity of $\Phi_{x,\psi}$ for almost all $x\in X$.

It remains to consider the case where $\J(\psi)=\infty$. Here we use that the $p$-boundedness of $f^-$ and the upper bound~\eqref{eqUpperBound} of $f$ ensure that $\J(\omega)<\infty$ for all $\omega\in L^\infty(X;\R^n)$. We choose a sequence $(\psi_k)_{k\in\N}\subset L^\infty(X;\R^n)$ which converges pointwise almost everywhere and strongly in $L^p(X;\R^n)$ to the function $\psi$. Then, by the previous result, we know that the functions $\Phi_{x,\psi_k}$, $k\in\N$, are for almost all $x\in X$ convex. Moreover, the local functional 
\[ L^p(X;\R^n)\to\Rinf,\quad v\mapsto\Phi_{x,v}(w) = \int_Xf(x,y,w,v(y))\d y \]
is -- because of the continuity of the function $f_{(x,y)}$ for almost all $(x,y)\in X\times X$, the $p$-boundedness of $f^-$, and the upper bound~\eqref{eqUpperBound} of~$f$ -- for almost all $x\in X$ and all $w\in\R^n$ continuous with respect to the strong topology, see e.g.\ Corollaries~$6.51$ and~$6.53$ in~\cite{FonsecaLeoni2007} (the proof works in the same way as the proof of Proposition~\ref{thStronglySemiContinuous}). Therefore, we have for almost all $x\in X$ that
\[ \Phi_{x,\psi}(w) = \lim_{k\to\infty}\Phi_{x,\psi_k}(w), \]
which shows that the function $\Phi_{x,\psi}$ is for almost all $x\in X$ convex.

This concludes the proof that the sequential lower semi-continuity of $\J$ implies for every function $\psi\in L^p(X;\R^n)$ for almost all $x\in X$ the convexity of the function $\Phi_{x,\psi}$.

\medskip
For the other direction, we assume that the function $\Phi_{x,\psi}$ is for every $\psi\in L^p(X;\R^n)$ for almost all $x\in X$ convex. Let $(u_k)_{k\in\N}\subset L^p(X;\R^n)$ be a sequence converging weakly in $L^p(X;\R^n)$ if $p\in[1,\infty)$ and weakly-star in $L^\infty(X;\R^n)$ if $p=\infty$ to a function $u\in L^p(X;\R^n)$. In particular, $(u_k)_{k\in\N}$ is bounded in $L^p(X;\R^n)$ and therefore, there exists a subsequence $(u_{k_\ell})_{\ell\in\N}$ of $(u_k)_{k\in\N}$ generating a Young measure $\nu$. 

I.e.\ we have a map $\nu:X\to\mathcal M(\R^n;\R)$, $x\mapsto\nu_x$, where $\mathcal M(\R^n;\R)$ denotes all signed Radon measures on~$\R^n$, which fulfils that $\nu_x$ is a probability measure for almost all $x\in X$, and that for all $\phi\in C_0(\R^n)$ the function
$X\to\R$, $x\mapsto\int_{\R^n}\phi(w)\d\nu_x(w)$
is  measurable and satisfies for every $h\in L^1(X)$ the equality
\begin{equation}\label{eqDefinitionOfYoungMeasure} 
\lim_{\ell\to\infty}\int_Xh(x)\phi(u_{k_\ell}(x))\d x = \int_Xh(x)\int_{\R^n}\phi(w)\d\nu_x(w)\d x.
\end{equation}
For a detailed introduction into the theory of Young measures, we refer to Chapter $8$ in~\cite{FonsecaLeoni2007}.

Since $(u_{k_\ell})_{\ell\in\N}$ generates the Young measure $\nu$, the sequence $(u_{k_\ell}\times u_{k_\ell})_{\ell\in\N}$ in the space $L^p(X\times X;\R^n\times\R^n)$ defined by $(u_{k_\ell}\times u_{k_\ell})(x,y) = (u_{k_\ell}(x),u_{k_\ell}(y))$ generates the Young measure $\nu\otimes\nu:X\times X\to\mathcal M(\R^n\times\R^n;\R)$ defined by $(\nu\otimes\nu)_{(x,y)}=\nu_x\otimes\nu_y$, where $\nu_x\otimes\nu_y$ denotes the product measure of $\nu_x$ and $\nu_y$.
Indeed, using the Stone--Weierstra\ss\ theorem, it is enough to verify that
\begin{multline*}
\lim_{\ell\to\infty}\int_X\int_Xh(x,y)\phi(u_{k_\ell}(x),u_{k_\ell}(y))\d x\d y \\
=\int_X\int_Xh(x,y)\int_{\R^n}\int_{\R^n}\phi(w,z)\d\nu_x(w)\d\nu_y(z)\d x\d y
\end{multline*}
holds for all functions $h\in L^1(X\times X)$, $\phi\in C_0(\R^n\times\R^n)$ of the form $h(x,y)=h_1(x)h_2(y)$ and $\phi(w,z)=\phi_1(w)\phi_2(z)$, $h_1,h_2\in L^1(X)$, $\phi_1,\phi_2\in C_0(\R^n)$. But this directly follows from Fubini's theorem and the relation~\eqref{eqDefinitionOfYoungMeasure}, see Proposition~$2.3$ in~\cite{Pedregal1997}.

Now, the $p$-boundedness of $f^-$ implies by the Dunford--Pettis theorem that the functions
\[ X\times X\to[0,\infty),\quad (x,y)\mapsto f^-(x,y,u_{k_\ell}(x),u_{k_\ell}(y)),\quad\ell\in\N, \]
are uniformly integrable. Thus, we can apply the fundamental theorem for Young measures, see e.g.\ Theorem~$8.6$ in~\cite{FonsecaLeoni2007}, and find that
\begin{equation}\label{eqLiminfInequality1}
\liminf_{\ell\to\infty}\J(u_{k_\ell})\ge\int_X\int_X\int_{\R^n}\int_{\R^n}f(x,y,w,z)\d\nu_y(z)\d\nu_x(w)\d y\d x.
\end{equation}

Moreover, condition~\eqref{eqUpperBound} implies that for almost all $x\in X$ and all $w\in\R^n$ the functions
\[ X\to\R,\quad y\mapsto f(x,y,w,u_{k_\ell}(y)),\quad\ell\in\N, \]
are uniformly integrable. Thus, we get from the continuity of the functions $f_{(x,y)}$ for almost all $(x,y)\in X\times X$ again with the fundamental theorem for Young measures that
\[ \lim_{\ell\to\infty}\int_Xf(x,y,w,u_{k_\ell}(y))\d y = \int_X\int_{\R^n}f(x,y,w,z)\d\nu_y(z)\d y \]
for almost all $x\in X$ and all $w\in\R^n$. So for almost all $x\in X$, the function
\begin{equation}\label{eqDefinitionOfPhiTilde}
\tilde\Phi_{x,\nu}:\R^n\to\Rinf,\quad\tilde\Phi_{x,\nu}(w)=\int_X\int_{\R^n}f(x,y,w,z)\d\nu_y(z)\d y
\end{equation}
is the limit of the convex functions $\Phi_{x,u_{k_\ell}}$, $\ell\in\N$, and is therefore convex.

Using now that $\nu_x$ is by definition of a Young measure for almost all $x\in X$ a probability measure, we find with Jensen's inequality that
\begin{align}
\int_X\int_X\int_{\R^n}\int_{\R^n}f(x,y,w,z)\d\nu_y(z)\d\nu_x(w)\d y\d x&=\int_X\int_{\R^n}\tilde\Phi_{x,\nu}(w)\d\nu_x(w)\d x \nonumber \\
&\ge\int_X\tilde\Phi_{x,\nu}\big({\textstyle\int_{\R^n}w\d\nu_x(w)}\big)\d x. \label{eqLiminfInequality2}
\end{align}

Since the sequence $(u_{k_\ell})_{\ell\in\N}$ converges weakly in $L^p(X;\R^n)$ if $p\in[1,\infty)$ and weakly-star in $L^\infty(X;\R^n)$ if $p=\infty$ to $u$, relation~\eqref{eqDefinitionOfYoungMeasure} implies that
\[ \int_{\R^n}w\d\nu_x(w) = u(x)\quad\text{for almost all}\quad x\in X. \]
Together with the pairwise symmetry of~$f$, we then get
\begin{equation}\label{eqLiminfInequality3}
\int_X\tilde\Phi_{x,\nu}({\textstyle\int_{\R^n}w\d\nu_x(w)})\d x = \int_X\tilde\Phi_{x,\nu}(u(x))\d x = \int_X\int_{\R^n}\Phi_{y,u}(z)\d\nu_y(z)\d y.
\end{equation}

Using now the convexity of $\Phi_{y,u}$ for almost all $y\in X$, we get again with Jensen's inequality that
\begin{equation}\label{eqLiminfInequality4}
\int_X\int_{\R^n}\Phi_{y,u}(z)\d\nu_y(z)\d y \ge \int_X\Phi_{y,u}\big({\textstyle\int_{\R^n}z\d\nu_y(z)}\big)\d y = \int_X\Phi_{y,u}(u(y))\d y = \J(u).
\end{equation}
Putting together the inequalities~\eqref{eqLiminfInequality1}, \eqref{eqLiminfInequality2}, \eqref{eqLiminfInequality3}, and \eqref{eqLiminfInequality4}, we have shown that
\[ \liminf_{\ell\to\infty}\J(u_{k_\ell}) \ge \J(u), \]
proving the \lsC\ of $\J$.
\end{proof}

Here, the proof that the convexity of the functions $\Phi_{x,\psi}$ implies the sequential lower semi-continuity of the functional~$\J$ makes only use of the upper bound~\eqref{eqUpperBound} to show the convexity of the function~$\tilde\Phi_{x,\nu}$ defined in~\eqref{eqDefinitionOfPhiTilde}. We can therefore waive this upper bound if we guarantee the convexity of~$\tilde\Phi_{x,\nu}$ by imposing that the function~$f_{(x,y)}$ is separately convex (i.e.\ the maps $\R^n\to\R$, $w\mapsto f_{(x,y)}(w,z)$ and $\R^n\to\R$, $w\mapsto f_{(x,y)}(z,w)$ are convex for all $z\in\R^n$) for almost all $(x,y)\in X\times X$, see~\cite{Pedregal1997}.

\begin{corollary}
Let $f:X\times X\times\R^n\times\R^n\to\R$ be a pairwise symmetric, measurable function whose negative part is $p$-bounded.

Then the non-local functional $\J$ on $L^p(X;\R^n)$ defined by the function~$f$ is sequentially lower semi-continuous with respect to the weak topology on $L^p(X;\R^n)$ for $p\in[1,\infty)$ and with respect to the weak-star topology on $L^\infty(X;\R^n)$ for $p=\infty$ if the function $f_{(x,y)}$, defined by~\eqref{eqfxy}, is for almost all $(x,y)\in X\times X$ separately convex.
\end{corollary}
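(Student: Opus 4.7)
The plan is to mirror the ``if'' direction of the proof of Theorem~\ref{thCharacterisationOfWeakSlsc}, replacing the two places where that argument needed the upper bound~\eqref{eqUpperBound} (to establish the convexity of $\tilde\Phi_{x,\nu}$ via a strong-continuity limit) or needed the convexity of $\Phi_{x,\psi}$ as a hypothesis, by direct appeals to separate convexity of $f_{(x,y)}$. Indeed, since $w\mapsto f(x,y,w,z)$ is convex for each $z$ whenever $(x,y)$ lies in a conull set, and since convexity is preserved by integration against arbitrary positive measures in the extended-real sense, the function $\tilde\Phi_{x,\nu}$ in~\eqref{eqDefinitionOfPhiTilde} is convex (as an $\Rinf$-valued map) for almost all $x$; the analogous argument applied to $\Phi_{y,u}$ from~\eqref{eqDefinitionOfPhi} yields its convexity for almost all $y$.

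With both convexities available, I would simply rerun the second half of the proof of Theorem~\ref{thCharacterisationOfWeakSlsc}: pass to a subsequence $(u_{k_\ell})_{\ell\in\N}$ of the given weakly (resp.\ weakly-star) convergent sequence generating a Young measure $\nu$; invoke the Dunford--Pettis theorem and the $p$-boundedness of $f^-$ to obtain uniform integrability of $(x,y)\mapsto f^-(x,y,u_{k_\ell}(x),u_{k_\ell}(y))$; apply the fundamental theorem for Young measures to get
\[ \liminf_{\ell\to\infty}\J(u_{k_\ell}) \ge \int_X\int_X\int_{\R^n}\int_{\R^n} f(x,y,w,z)\d\nu_y(z)\d\nu_x(w)\d y\d x; \]
and then conclude via the same Jensen-inequality chain as in~\eqref{eqLiminfInequality2}--\eqref{eqLiminfInequality4}, using the centre-of-mass identity $\int_{\R^n} w\d\nu_x(w) = u(x)$ and the pairwise symmetry of $f$.

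The main obstacle is purely technical: without~\eqref{eqUpperBound} the auxiliary functions $\tilde\Phi_{x,\nu}$ and $\Phi_{y,u}$ may take the value $+\infty$, so each Jensen step has to be carried out in $\Rinf$. This causes no real difficulty because the $p$-boundedness of $f^-$ supplies an integrable lower envelope that makes every integral in the chain well-defined in $(-\infty,\infty]$, which is precisely the flexibility that permits us to drop the upper bound~\eqref{eqUpperBound} in favour of the pointwise structural condition of separate convexity.
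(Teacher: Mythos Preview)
Your proposal is correct and follows precisely the approach the paper indicates in the paragraph immediately preceding the Corollary: the upper bound~\eqref{eqUpperBound} in the sufficiency direction of Theorem~\ref{thCharacterisationOfWeakSlsc} is invoked solely to obtain the convexity of $\tilde\Phi_{x,\nu}$ via a limit argument, and separate convexity of $f_{(x,y)}$ yields the convexity of both $\tilde\Phi_{x,\nu}$ and $\Phi_{y,u}$ directly by integration, after which the Young-measure and Jensen chain~\eqref{eqLiminfInequality1}--\eqref{eqLiminfInequality4} goes through unchanged. Your remark that the auxiliary functions may now be $\Rinf$-valued, with well-definedness guaranteed by the $p$-bounded negative part, is the only extra care needed and is handled correctly.
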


\section{Equivalent Integrands}\label{seEquivalentIntegrands}
In this section, we will try to characterise the classes of functions which define the same non-local functional. In particular, we are interested in finding a good representative for each of these classes and thereby to possibly simplify the criterion of sequential lower semi-continuity given in Theorem~\ref{thCharacterisationOfWeakSlsc}.

We will restrict our attention to rather regular integrands.
\begin{definition}
Let $f:X\times X\times\R^n\times\R^n\to\R$ be a pairwise symmetric, measurable function. Moreover, we assume that the function 
\begin{equation}\label{eqBaseIntegrand}
X\times X\to\R,\quad (x,y)\mapsto f(x,y,0,0)
\end{equation}
is integrable, that the function $f_{(x,y)}$, defined by~\eqref{eqfxy}, is continuously differentiable for almost all $(x,y)\in X\times X$, and that there exist for every $M\in(0,\infty)$ positive functions $\alpha_M\in L^{p^*}(X)\otimes L^1(X)$ and $\beta_M\in L^1(X)$, with $p^*$ being the H\"older conjugate of $p$, such that
\begin{equation}\label{eqBoundForFirstDerivative}
|\nabla_wf(x,y,w,z)|\le\alpha_M(x,y)+\beta_M(y)\pp{p-1}M(w)
\end{equation}
for almost all $(x,y)\in X\times X$ and all $w,z\in\R^n$ with $|z|\le M$. 
Then we call $f$ a $p$-regular function.
\end{definition}

We remark that for a $p$-regular function $f$ the estimate
\[ |f(x,y,w,z)| \le |f(x,y,0,0)|+\int_0^1|w||\nabla_wf(x,y,tw,0)|\d t+\int_0^1|z||\nabla_zf(x,y,w,tz)|\d t \]
implies together with the integrability of the function~\eqref{eqBaseIntegrand} and the bound~\eqref{eqBoundForFirstDerivative} for the derivative of $f$ that there exist for every $M\in(0,\infty)$ positive functions $\alpha_M\in L^1(X\times X)$ and $\beta_M\in L^1(X)$ such that
\[ |f(x,y,w,z)|\le\alpha_M(x,y)+\beta_M(x)\p(z) \]
for almost all $(x,y)\in X\times X$ and all $w,z\in\R^n$ with $|w|\le M$. 

\medskip
We will in the following give a characterisation of the class of functions whose corresponding non-local functional constantly vanishes.
If we only consider real-valued non-local functionals, then two functions define the same non-local functional if and only if they differ by a function of this class.
\begin{definition}
We denote by $\mathcal N^p$ the set of all pairwise symmetric, measurable functions $f:X\times X\times\R^n\times\R^n\to\R$ whose negative part $f^-$ obeys condition~\eqref{eqIntegrabilityNegativePart} and for which the non-local functional $\J$ on $L^p(X;\R^n)$ defined by $f$ fulfils $\J(u)=0$ for all $u\in L^p(X;\R^n)$.
\end{definition}
We further introduce a subset~$\mathcal N_0^p$ of~$\mathcal N^p$ which is easier to parametrise.
\begin{definition}
Let $\mathcal N_0^p$ denote the set of all pairwise symmetric, measurable functions $f:X\times X\times\R^n\times\R^n\to\R$ for which there exist a measurable function $g:X\times X\times\R^n\to\R$ and a symmetric function $h\in L^1(X\times X)$ with the properties that we find for every $M\in(0,\infty)$ a function $\alpha_M\in L^1(X)$ and a constant $C\in(0,\infty)$ with
\begin{equation}\label{eqUpperBoundForG}
\int_X|g(x,y,w)|\d y\le\alpha_M(x)+C\p(w)
\end{equation}
for almost all $x\in X$ and all $w\in\R^n$, 
\begin{equation}\label{eqMeanValueZero}
\int_X\int_Xh(x,y)\d x\d y = 0,\quad\int_Xg(x,y,w)\d y=0
\end{equation}
for almost all $x\in X$ and all $w\in\R^n$, and
\begin{equation}\label{eqZeroIntegrals}
f(x,y,w,z) = g(x,y,w)+g(y,x,z)+h(x,y)
\end{equation}
for almost all $(x,y)\in X\times X$ and all $w,z\in\R^n$.
\end{definition}
\begin{lemma}
We have $\mathcal N_0^p\subset\mathcal N^p$.
\end{lemma}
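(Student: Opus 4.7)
The plan is to verify that every $f\in\mathcal N_0^p$ satisfies both prerequisites for membership in $\mathcal N^p$: namely, the integrability condition~\eqref{eqIntegrabilityNegativePart} on its negative part, and the vanishing of the associated functional $\J$. Both will follow by a direct Fubini-type computation using the decomposition~\eqref{eqZeroIntegrals} together with the bound~\eqref{eqUpperBoundForG} and the mean-value conditions~\eqref{eqMeanValueZero}.

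First, I would fix $u\in L^p(X;\R^n)$ and show that $(x,y)\mapsto f(x,y,u(x),u(y))$ is actually integrable (not just $f^-$). Splitting according to~\eqref{eqZeroIntegrals}, the three contributions are bounded by integrating~\eqref{eqUpperBoundForG} over $x$ and invoking~\eqref{eqPropertyOfP}:
\[
\int_X\!\int_X|g(x,y,u(x))|\d y\d x\le\|\alpha_M\|_1+C\,\p(\|u\|_p),
\]
with $M$ chosen larger than $\|u\|_\infty$ when $p=\infty$; the symmetric term in $g(y,x,u(y))$ is treated identically via Tonelli, and $h\in L^1(X\times X)$ by assumption. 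This shows the full integrand is in $L^1(X\times X)$, so $f^-$ in particular satisfies~\eqref{eqIntegrabilityNegativePart}, and moreover justifies Fubini for the computation of $\J(u)$.

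Second, I would compute $\J(u)$ by splitting
\[
\J(u)=\int_X\!\int_Xg(x,y,u(x))\d y\d x+\int_X\!\int_Xg(y,x,u(y))\d x\d y+\int_X\!\int_Xh(x,y)\d x\d y.
\]
The third summand vanishes by the first half of~\eqref{eqMeanValueZero}. For the first summand, the inner $y$-integral is $\int_Xg(x,y,u(x))\d y=0$ for almost all $x$ by the second half of~\eqref{eqMeanValueZero} applied with $w=u(x)$, so the whole double integral is zero. The second summand reduces to the first after swapping the roles of $x$ and $y$ via Tonelli, and is therefore zero as well. Consequently $\J(u)=0$ for every $u\in L^p(X;\R^n)$, so $f\in\mathcal N^p$.

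There is essentially no obstacle here; the only subtle point is ensuring that~\eqref{eqMeanValueZero}, which holds for all $w\in\R^n$ outside an exceptional set that might depend on $w$, can be applied at the measurable function value $w=u(x)$ for almost every $x$. This is where the formulation of~\eqref{eqMeanValueZero} as ``for almost all $x\in X$ and all $w\in\R^n$'' is crucial: one reads it as a single null set $N\subset X$ outside of which $\int_Xg(x,y,w)\d y=0$ holds for \emph{every} $w$, and then the substitution $w=u(x)$ is legitimate for $x\notin N$.
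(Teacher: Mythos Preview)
Your proposal is correct and follows essentially the same approach as the paper: use the bound~\eqref{eqUpperBoundForG} to justify Fubini and show integrability, then split~$\J(u)$ via~\eqref{eqZeroIntegrals} and apply~\eqref{eqMeanValueZero} to see that each piece vanishes. The paper's proof is simply a terser version of yours, and your additional remarks about verifying~\eqref{eqIntegrabilityNegativePart} explicitly and about the null-set quantifier in~\eqref{eqMeanValueZero} are welcome clarifications.
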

\begin{proof}
Let $f\in\mathcal N_0^p$. Then there exist a measurable function $g:X\times X\times\R^n\to\R$ and a symmetric function $h\in L^1(X\times X)$ with the properties~\eqref{eqUpperBoundForG}, \eqref{eqMeanValueZero}, and~\eqref{eqZeroIntegrals}. Because of the integrability condition~\eqref{eqUpperBoundForG}, we can use Fubini's theorem and find with~\eqref{eqMeanValueZero} that the non-local functional $\J$ defined by the function $f$ fulfils
\[ \J(u) = \int_X\int_Xg(x,y,u(x))\d y\d x + \int_X\int_Xg(y,x,u(y))\d x\d y = 0 \]
for every $u\in L^p(X;\R^n)$. Thus, $f\in\mathcal N^p$.
\end{proof}

If we restrict our attention to $p$-regular functions, then there is no difference between $\mathcal N_0^p$ and $\mathcal N^p$.

\begin{proposition}\label{thZeroFunctional}
Let $f:X\times X\times\R^n\times\R^n\to\R$ be a $p$-regular function. Then $f\in\mathcal N^p$ if and only if $f\in\mathcal N_0^p$.
\end{proposition}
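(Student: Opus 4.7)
Since the preceding lemma handles $\mathcal N_0^p\subset\mathcal N^p$, I focus on the converse: assume $f$ is $p$-regular with $\J\equiv0$ on $L^p(X;\R^n)$ and construct the decomposition. The strategy is twofold. First, exploit the Euler--Lagrange identity obtained by differentiating $\J$ to show that, modulo pairwise symmetry, $f$ is additively separable in its last two arguments, i.e.\ of the form $\tilde g(x,y,w)+\tilde g(y,x,z)$. Then use $\J\equiv0$ a second time on the resulting $\tilde g$ to extract a term that depends only on~$x$ and is independent of~$w$, which moves into the remainder~$h$ and leaves the $y$-mean of $g$ equal to zero.

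The technical steps I plan to carry out are as follows. \emph{(i)}~The bound~\eqref{eqBoundForFirstDerivative} justifies differentiation under the integral, so for every $u,v\in L^\infty(X;\R^n)$,
\[ 0 = \frac{d}{d\varepsilon}\Big|_{\varepsilon=0}\J(u+\varepsilon v) = 2\int_X\int_X\nabla_wf(x,y,u(x),u(y))\cdot v(x)\d y\d x, \]
the factor~$2$ coming from pairwise symmetry (which turns the $\nabla_zf$-contribution into a second copy of the $\nabla_wf$-contribution after a Fubini swap). Varying $v$ yields $\int_X\nabla_wf(x,y,u(x),u(y))\d y=0$ for a.e.\ $x\in X$ and every $u\in L^\infty(X;\R^n)$. \emph{(ii)}~Applying this identity with $u=w_1\chi_A+w_2\chi_{A\c}$ and subtracting the constant case $u\equiv w_1$ gives
\[ \int_B\bigl(\nabla_wf(x,y,w_1,w_2)-\nabla_wf(x,y,w_1,w_1)\bigr)\d y=0\quad\text{for a.e.\ }x\in X\setminus B, \]
for every measurable $B\subset X$ and all $w_1,w_2\in\R^n$. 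Running $B$ through a countable generating family of Borel sets and applying Lebesgue differentiation, combined with the $C^1$ regularity of $f_{(x,y)}$ to pass from rational to real parameters, upgrades this to $\nabla_wf(x,y,w,z)=\nabla_wf(x,y,w,0)$ for a.e.\ $(x,y)$ and all $w,z\in\R^n$. Integrating in $w$ and using pairwise symmetry produces
\[ f(x,y,w,z)=\tilde g(x,y,w)+\tilde g(y,x,z),\quad \tilde g(x,y,w)=f(x,y,w,0)-\tfrac12f(x,y,0,0). \]
\emph{(iii)}~Fubini and pairwise symmetry then give $\J(u)=2\int_X\tilde\psi(x,u(x))\d x$ with $\tilde\psi(x,w)=\int_X\tilde g(x,y,w)\d y$. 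Repeating the two-state comparison of step~(ii) on $\tilde\psi$ (now together with continuity of $\tilde\psi$ in $w$) forces $\tilde\psi(x,w)$ to be independent of $w$; write $\tilde\psi(x,w)=\tilde\psi(x)$, with $\int_X\tilde\psi(x)\d x=0$ from the constant-$u$ case. Setting
\[ g(x,y,w)=\tilde g(x,y,w)-\tfrac{1}{\L m(X)}\tilde\psi(x),\qquad h(x,y)=\tfrac{1}{\L m(X)}\bigl(\tilde\psi(x)+\tilde\psi(y)\bigr) \]
produces the decomposition~\eqref{eqZeroIntegrals}.

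The remaining verifications are routine: $h$ is symmetric, lies in $L^1(X\times X)$, and satisfies $\iint h=0$; the integrability bound~\eqref{eqUpperBoundForG} on $g$ follows from estimating $|\tilde g(x,y,w)|$ by $\tfrac12|f(x,y,0,0)|+|w|\int_0^1|\nabla_wf(x,y,tw,0)|\d t$, substituting~\eqref{eqBoundForFirstDerivative}, integrating in $y$, and, for $p\in[1,\infty)$, applying Young's inequality to absorb the mixed linear-in-$|w|$ term into the allowed form $\alpha_M(x)+C\p(w)$. The main obstacle is the measure-theoretic step~(ii): the passage from ``$\int_B\cdots\d y=0$ for a.e.\ $x\notin B$ for every measurable $B$'' to a genuine pointwise identity in both $y$ and the parameters $(w_1,w_2)$ requires the careful combination of a countable generating family, Lebesgue differentiation, and a continuity-based extension over the uncountable parameter space. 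Once this is established, the analogous argument in step~(iii) is essentially a repetition.
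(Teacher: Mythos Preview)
Your proposal is correct and follows the same strategy as the paper: differentiate $\J$, deduce that $\nabla_wf(x,y,w,z)$ is independent of~$z$, integrate to obtain the additive decomposition, and verify the $\mathcal N_0^p$ bounds. The paper is slightly more direct in that it takes $g(x,y,w)=f(x,y,w,0)-f(x,y,0,0)$ and $h(x,y)=f(x,y,0,0)$ from the outset and reads off $\int_Xg(x,y,w)\,dy=0$ straight from the Euler--Lagrange identity (applied with constant~$u$, combined with the already-established $z$-independence), so your step~(iii) is not needed as a separate two-state argument.
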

\begin{proof}
Let $f\in\mathcal N^p$. Then the non-local functional $\J$ defined by $f$ is constantly equal to zero. So, we can take the variational derivative of $\J$ and get with the pairwise symmetry of the function~$f$ that
\begin{equation}\label{eqVariationalDerivativeIsZero}
0 = \lim_{t\to0}\frac{\J(u+tv)-\J(u)}t = 2\sum_{i=1}^n\int_X\int_X\partial_{w_i}f(x,y,u(x),u(y))v_i(x)\d x\d y
\end{equation}
for all functions $u,v\in L^\infty(X;\R^n)$. Here, we have used the bound~\eqref{eqBoundForFirstDerivative} of the first partial derivative of~$f$ to differentiate under the integral sign.

Thus, we have for all $i\in[1,n]\cap\N$, almost all $x\in X$, all $w\in\R^n$, and all functions $u\in L^\infty(X;\R^n)$ that
\begin{equation}\label{eqMeanOfDerivative}
\int_X\partial_{w_i}f(x,y,w,u(y))\d y = 0.
\end{equation}
Indeed, there would otherwise exist an $i_0\in[1,n]\cap\N$, a set $A\subset X$ with positive measure, a function $\varphi\in L^\infty(A;\R^n)$, a function $u\in L^\infty(X;\R^n)$, a constant $\delta\in(0,\infty)$, and a sign $\epsilon\in\{-1,1\}$ such that
\[ \epsilon\int_X\partial_{w_{i_0}}f(x,y,\varphi(x),u(y))\d y\ge\delta\quad\text{for all}\quad x\in A. \]
Using the bound~\eqref{eqBoundForFirstDerivative} of the function $\partial_{w_{i_0}}f$, we find a measurable subset $\tilde A\subset A$ such that
\[ \int_{\tilde A}\int_{\tilde A}|\partial_{w_{i_0}}f(x,y,\varphi(x),\psi(y))|\d y\d x<\frac\delta2\L m(\tilde A) \]
for all functions $\psi\in L^\infty(X;\R^n)$ with $\|\psi\|_\infty\le\max\{\|u\|_\infty,\|\varphi\|_\infty\}$.
Defining now the functions $\tilde u,v\in L^\infty(X;\R^n)$ by
\[ \tilde u(x) = \begin{cases}u(x)&\text{if}\quad x\in X\setminus\tilde A,\\\varphi(x)&\text{if}\quad x\in\tilde A\end{cases} \]
and $v_i(x)=\delta_{i,i_0}\chi_{\tilde A}(x)$, we get
\begin{multline*}
\epsilon\sum_{i=1}^n\int_X\int_X\partial_{w_i}f(x,y,\tilde u(x),\tilde u(y))v_i(x)\d x\d y
=\epsilon\int_{\tilde A}\int_X\partial_{w_{i_0}}f(x,y,\varphi(x),\tilde u(y))\d y\d x \\
>\epsilon\int_{\tilde A}\int_X\partial_{w_{i_0}}f(x,y,\varphi(x),u(y))\d y\d x-\delta\L m(\tilde A)\ge0,
\end{multline*}
which is a contradiction to~\eqref{eqVariationalDerivativeIsZero}.

\medskip
Now, equation~\eqref{eqMeanOfDerivative} is only possible if the function $\R^n\to\R$, $z\mapsto\partial_{w_i}f(x,y,w,z)$ is constant for all $i\in[1,n]\cap\N$, almost all $(x,y)\in X\times X$, and all $w\in\R^n$. Defining therefore the function $g:X\times X\times\R^n\to\R$ by
\[ g(x,y,w) = \sum_{i=1}^n\int_0^1w_i\partial_{w_i}f(x,y,tw,0)\d t, \]
we find with the fundamental theorem of calculus that $g(x,y,w)=f(x,y,w,z)-f(x,y,0,z)$ holds for almost all $(x,y)\in X\times X$ and all $w,z\in\R^n$. Thus, we get with the pairwise symmetry of $f$ that 
\[ f(x,y,w,z)=g(x,y,w)+g(y,x,z)+f(x,y,0,0) \]
for almost all $(x,y)\in X\times X$ and all $w,z\in\R^n$. So, $f$ has the form~\eqref{eqZeroIntegrals}, where the function $h\in L^1(X\times X)$ is defined by $h(x,y)=f(x,y,0,0)$ for all $x,y\in X$. Moreover, the $p$-regularity of $f$ implies the bound~\eqref{eqUpperBoundForG} for $g$, and the conditions~\eqref{eqMeanValueZero} finally follow from $\J(0)=0$ and from~\eqref{eqMeanOfDerivative} together with Fubini's theorem. Thus, we have shown that $f\in\mathcal N_0^p$ which concludes the proof.
\end{proof}

In the following, we will try to use this ambiguity in the integrand of a non-local functional to find for a non-local functional $\J$, which is sequentially lower semi-continuous with respect to the weak topology if $p\in[1,\infty)$ and to weak-star topology if $p=\infty$, an integrand $\tilde f$ defining the functional $\J$ such that $\tilde f_{(x,y)}$ is separately convex. But it seems that this is only possible in the case where the functional is defined on a Lebesgue space of real-valued functions, i.e. for $n=1$.

\begin{theorem}\label{thConvexityOfTheIntegrand}
Let $n=1$ and let $f:X\times X\times\R\times\R\to\R$ be a $p$-regular function which additionally fulfils that the function $f_{(x,y)}$, defined by~\eqref{eqfxy}, is for almost all $(x,y)\in X\times X$ two times continuously differentiable and that there exists for every $M\in(0,\infty)$ a function $\alpha_M\in L^1(X\times X)$ such that
\begin{equation}\label{eqBoundForSecondDerivative}
|\partial_w^2f(x,y,w,z)|\le\alpha_M(x,y)
\end{equation}
for almost all $(x,y)\in X\times X$ and all $w,z\in\R$ with $|w|\le M$ and $|z|\le M$.

Then the function
\[ \Phi_{x,\psi}:\R\to\R,\quad \Phi_{x,\psi}(w) = \int_Xf(x,y,w,\psi(y))\d y \] 
is for every function $\psi\in L^p(X)$ for almost all $x\in X$ convex if and only if there exist a pairwise symmetric, measurable function $\tilde f:X\times X\times\R\times\R\to\R$ and a function $f_0\in\mathcal N_0^p$ such that $\tilde f_{(x,y)}$ is for almost all $(x,y)\in X\times X$ separately convex and
\begin{equation}\label{eqEquivalenceClassOfSeparateConvexFunctions}
f(x,y,w,z) = \tilde f(x,y,w,z)+f_0(x,y,w,z)
\end{equation}
for almost all $(x,y)\in X\times X$ and all $w,z\in\R$.
\end{theorem}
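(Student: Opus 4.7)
The plan is to prove the two implications separately, with the nontrivial content in the direction that convexity of $\Phi_{x,\psi}$ forces the existence of a separately convex representative. For the easy direction, suppose $f=\tilde f+f_0$ with $\tilde f_{(x,y)}$ separately convex and $f_0(x,y,w,z)=g(x,y,w)+g(y,x,z)+h(x,y)\in\mathcal N_0^p$. Then
\[\Phi_{x,\psi}(w)=\int_X\tilde f(x,y,w,\psi(y))\d y+\int_Xg(x,y,w)\d y+\int_Xg(y,x,\psi(y))\d y+\int_Xh(x,y)\d y.\]
The first term is convex in $w$ because $\tilde f$ is convex in its third argument, the second vanishes for almost all $x$ by the defining property~\eqref{eqMeanValueZero} of $\mathcal N_0^p$, and the remaining two terms do not depend on $w$; hence $\Phi_{x,\psi}$ is convex for almost all $x$.

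For the converse, I plan to construct an explicit $g$ so that $\tilde f(x,y,w,z):=f(x,y,w,z)-g(x,y,w)-g(y,x,z)$ is separately convex. Since $f_{(x,y)}$ is twice continuously differentiable with $\partial_w^2f$ obeying~\eqref{eqBoundForSecondDerivative}, differentiating $\Phi_{x,\psi}$ twice under the integral sign translates the assumed convexity into
\[\int_X\partial_w^2f(x,y,w,\psi(y))\d y\ge 0\]
for almost all $x$, all $w\in\R$, and every bounded measurable $\psi$. To obtain an exceptional $x$-null set that does not depend on $\psi$ or $w$, I will evaluate this inequality only on a countable dense collection of step functions $\psi$ with rational levels on rational partitions of $X$ together with rational $w$, and then extend by the continuity of $f_{(x,y)}$ in its last two arguments.

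Given this uniform second-derivative inequality, the next step is a measurable selection argument. The function $A_M(x,y,w):=\inf_{|z|\le M}\partial_w^2f(x,y,w,z)$ is measurable in $(x,y)$ and continuous in $w$ (using uniform continuity of $\partial_w^2f$ on the compact set $[-M,M]^2$), and a selection yields a $\psi^*\in L^\infty(X)$ realising the pointwise infimum, so that $\int_XA_M(x,y,w)\d y\ge 0$ for almost all $x$. Monotone convergence as $M\to\infty$ (using the $L^1(X\times X)$-bound $\alpha_M$ on $\partial_w^2f$) gives $\int_XA(x,y,w)\d y\ge 0$ for $A(x,y,w):=\inf_{z\in\R}\partial_w^2f(x,y,w,z)$. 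Setting $\mu(x,w):=\L m(X)^{-1}\int_XA(x,y,w)\d y\ge 0$ and $\gamma(x,y,w):=A(x,y,w)-\mu(x,w)$ then produces a function with $\int_X\gamma(x,y,w)\d y=0$ and $\gamma(x,y,w)\le\partial_w^2f(x,y,w,z)$ for every $z\in\R$.

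Finally, defining $g(x,y,w):=\int_0^w(w-s)\gamma(x,y,s)\d s$ gives $\partial_w^2g=\gamma$ and $\int_Xg(x,y,w)\d y=0$, and $\tilde f(x,y,w,z):=f(x,y,w,z)-g(x,y,w)-g(y,x,z)$ is pairwise symmetric with $\partial_w^2\tilde f\ge 0$ everywhere; the analogous inequality $\partial_z^2\tilde f\ge 0$ follows from the pairwise symmetry of $f$. It remains to verify the growth bound~\eqref{eqUpperBoundForG} for $g$ in order to conclude that $f_0(x,y,w,z):=g(x,y,w)+g(y,x,z)$ lies in $\mathcal N_0^p$; this follows by integrating the $p$-regularity bound on $\gamma$ twice in $w$. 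I expect the main obstacle to be the passage from the bounded-selection estimate $\int_XA_M\d y\ge 0$ to $\int_XA\d y\ge 0$: since the infimum runs over the unbounded parameter $z$, $A$ need not be bounded below by an integrable function, and handling this limit will require a careful truncation argument exploiting both the $p$-regularity of $f$ and Proposition~\ref{thZeroFunctional}, which permits adjusting $g$ by arbitrary elements of $\mathcal N_0^p$.
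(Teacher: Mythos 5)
Your blueprint reproduces the paper's overall structure -- differentiate $\Phi_{x,\psi}$ twice under the integral, minimise $\partial_w^2 f(x,y,w,\cdot)$ over $|z|\le M$, pass $M\to\infty$, subtract the $y$-mean, and double-integrate in $w$ to obtain $g$ and hence $\tilde f=f-g(x,y,w)-g(y,x,z)$ -- but you take a genuinely different route to the central estimate $\int_X A_M(x,y,w)\d y\ge0$ (the paper's $\gamma_M$). The paper never establishes an exceptional $x$-set independent of $\psi$: since the null set in~\eqref{eqPositivityOfSecondDerivative} a priori depends on $\psi$ while the Aumann selection $\psi^*_{x,w}$ depends on $x$, the paper applies~\eqref{eqPositivityOfSecondDerivative} for $\psi=\psi^*_{x,w}$ at a \emph{nearby} point $\tilde x$ and transfers the estimate back to $x$ by Scorza--Dragoni's theorem and Lebesgue's density theorem. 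Your route -- first producing a uniform $x$-null set by testing~\eqref{eqPositivityOfSecondDerivative} on a countable dense family of simple functions and rational $w$, extending to all bounded $\psi$ by dominated convergence (using $|\partial_w^2 f(x,y,w,\psi(y))|\le\alpha_M(x,y)$, with $\alpha_M(x,\cdot)\in L^1(X)$ for a.e.\ $x$ by Fubini, and continuity in $z$ for a.e.\ $y$), and only then inserting the selection $\psi^*_{x,w}$ -- is sound, bypasses the Scorza--Dragoni and density machinery entirely, and is arguably the cleaner argument once spelled out.

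Two smaller points. You misidentify where the work lies: the limit $M\to\infty$ that you flag as the main obstacle is routine. Since $A_M$ decreases in $M$ with $A_M(x,y,w)\le\partial_w^2 f(x,y,w,0)$, dominated in $y$ by $\alpha_{|w|}(x,\cdot)\in L^1(X)$ for a.e.\ $x$ by~\eqref{eqBoundForSecondDerivative}, monotone convergence gives $\int_X A(x,y,w)\d y\ge0$ at once, and $\int_X A^-(x,y,w)\d y\le\int_X A^+(x,y,w)\d y\le\int_X|\partial_w^2f(x,y,w,0)|\d y<\infty$ then shows $A(x,\cdot,w)\in L^1(X)$, so $\mu$ and $\gamma$ are well-defined without any truncation or appeal to $\mathcal N_0^p$-adjustments. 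Also, ``integrating the $p$-regularity bound on $\gamma$'' is imprecise: $p$-regularity bounds $\nabla_w f$, not $\partial_w^2 f$ or $\gamma$. The correct chain uses $\int_Xg(x,y,w)\d y=0$ to write $\int_X|g|\d y=2\int_Xg^+\d y$, bounds $g$ pointwise by the double $w$-primitive of $\partial_w^2f(x,y,\cdot,0)$, rewrites that as $\int_0^w\bigl(\partial_wf(x,y,\tilde w,0)-\partial_wf(x,y,0,0)\bigr)\d\tilde w$, and then applies~\eqref{eqBoundForFirstDerivative} together with Young's inequality (for $p<\infty$) to reach~\eqref{eqUpperBoundForG}.
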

\begin{proof}
Let us first assume that the function $f$ is of the form~\eqref{eqEquivalenceClassOfSeparateConvexFunctions}. Then there exist by definition of the set $\mathcal N_0^p$ a measurable function $g:X\times X\times\R\to\R$ and a symmetric function $h\in L^1(X\times X)$ with the properties~\eqref{eqUpperBoundForG} and~\eqref{eqMeanValueZero} such that
\[ f(x,y,w,z) = \tilde f(x,y,w,z)+g(x,y,w)+g(y,x,z)+h(x,y) \]
for almost all $(x,y)\in X\times X$ and all $w,z\in\R$.
Thus, the function $\Phi_{x,\psi}$ fulfils for every $\psi\in L^p(X)$ for almost all $x\in X$ that
\[ \Phi_{x,\psi}(w) = \int_X\tilde f(x,y,w,\psi(y))\d y+\int_Xg(y,x,\psi(y))\d y+\int_Xh(x,y)\d y \]
for all $w\in\R$ and is therefore convex because of the separate convexity of the function $\tilde f_{(x,y)}$ for almost all $(x,y)\in X\times X$. 

\medskip
Let us on the other hand assume that the function $\Phi_{x,\psi}$ is for every $\psi\in L^p(X)$ for almost all $x\in X$ convex.
Since the function $f_{(x,y)}$ is for almost all $(x,y)\in X\times X$ two times differentiable and we have the bounds~\eqref{eqBoundForFirstDerivative} and~\eqref{eqBoundForSecondDerivative} for its partial derivatives, we know that $\Phi_{x,\psi}$ is for every $\psi\in L^\infty(X)$ for almost all $x\in X$ two times differentiable and that we can differentiate under the integral sign. The convexity of $\Phi_{x,\psi}$ therefore implies for every $\psi\in L^\infty(X)$ that
\begin{equation}\label{eqPositivityOfSecondDerivative}
\frac{\d^2\Phi_{x,\psi}}{\d w^2}(w)=\int_X\partial_w^2f(x,y,w,\psi(y))\d y\ge0
\end{equation}
for almost all $x\in X$ and all $w\in\R$.

We now define for every $M\in(0,\infty)$ the measurable function
\[ \gamma_M:X\times X\times\R\to\R,\quad\gamma_M(x,y,w)=\min_{z\in[-M,M]}\partial_w^2f(x,y,w,z). \]
Then condition~\eqref{eqPositivityOfSecondDerivative} implies that
\begin{equation}\label{eqPositivityOfTheMeanOfGamma}
\int_X\gamma_M(x,y,w)\d y\ge0
\end{equation}
for almost all $x\in X$, all $w\in\R$, and all $M\in(0,\infty)$. 
To prove this, we first pick for arbitrary $M\in(0,\infty)$ and $\varepsilon\in(0,\infty)$ a measurable set $D\subset X$ (with arbitrarily small measure) and a $\lambda\in(0,\infty)$ such that the function $\alpha_M$ defined by~\eqref{eqBoundForSecondDerivative} fulfils
\[ \int_A\alpha_M(x,y)\d y < \frac\varepsilon3 \]
for every measurable set $A\subset X$ with $\L m(A)<\lambda$ and every $x\in X\setminus D$. 
By Scorza--Dragoni's theorem, see e.g.\ Theorem~$6.35$ in~\cite{FonsecaLeoni2007}, we can further choose a compact set $E\subset X\times X$ with $\L{2m}(X\times X\setminus E)<\frac12\lambda^2$ such that the restricted functions $\gamma_M|_{E\,\times\,[-M,M]}$ and $\partial_w^2f|_{E\,\times\,[-M,M]\,\times\,[-M,M]}$ are continuous. In particular, we find a constant $\delta\in(0,\infty)$ such that
\[ |\partial_w^2f(x,y,w,z)-\partial_w^2f(\tilde x,y,w,z)|<\frac\varepsilon{3\L m(X)} \]
for all $x,\tilde x,y\in X$ and $w,z\in[-M,M]$ with $(x,y)\in E$, $(\tilde x,y)\in E$, and $|x-\tilde x|<\delta$. Moreover, we define the set 
\[ F=\{x\in X\mid \L m(X\setminus E_x)\ge\tfrac\lambda2\}, \]
where $E_x=\{y\in X\mid (x,y)\in E\}$, $x\in X$, and remark that we have the estimate $\L m(F)<\frac2\lambda\L{2m}(X\times X\setminus E)<\lambda$.
By Aumann's measurable selection theorem, see e.g.\ Theorem~$6.10$ in~\cite{FonsecaLeoni2007}, we finally find for every $x\in X$ and every $w\in[-M,M]$ a measurable function $\psi_{x,w}:X\to[-M,M]$ with $\partial_w^2f(x,y,w,\psi_{x,w}(y))=\gamma_M(x,y,w)$ for almost all $y\in X$. 

Putting all this together, we get for every $x\in X\setminus(D\cup F)$ and every $\tilde x\in X\setminus F$ with $|x-\tilde x|<\delta$ that
\begin{align*}
\int_X\gamma_M(x,y,w)\d y&\ge\int_{E_x\cap E_{\tilde x}}\partial_w^2f(x,y,w,\psi_{x,w}(y))\d y-\int_{X\setminus E_x\cup X\setminus E_{\tilde x}}\alpha_M(x,y)\d y \\
&\ge\int_{E_x\cap E_{\tilde x}}\left(\partial_w^2f(\tilde x,y,w,\psi_{x,w}(y))-\frac\varepsilon{3\L m(X)}\right)\d y-\frac\varepsilon3.
\end{align*}
By Lebesgue's density theorem, we have for almost every point $x\in X\setminus(D\cup F)$ that the set of points $\tilde x\in X\setminus F$ with $|x-\tilde x|<\delta$ has positive measure. Because of condition~\eqref{eqPositivityOfSecondDerivative}, we therefore find for almost every $x\in X\setminus(D\cup F)$ and every $w\in[-M,M]$ a point $\tilde x\in X\setminus F$ with distance $|x-\tilde x|<\delta$ such that $\int_X\partial_w^2f(\tilde x,y,w,\psi_{x,w}(y))\d y\ge0$ and thus
\[ \int_X\gamma_M(x,y,w)\d y \ge-\varepsilon. \]
Letting now $\lambda$, $\varepsilon$, and the measure of the set $D$ tend to zero, we get~\eqref{eqPositivityOfTheMeanOfGamma}.

\medskip
Since the map $(0,\infty)\to\R$, $M\mapsto\gamma_M(x,y,w)$ is for all $x,y\in X$ and $w\in\R$ monotonically decreasing, we may define $\gamma(x,y,w)=\lim_{M\to\infty}\gamma_M(x,y,w)$. From the Lebesgue monotone convergence theorem, we further get that
\[ \int_X\gamma(x,y,w)\d y = \lim_{M\to\infty}\int_X\gamma_M(x,y,w)\d y\ge0. \]
With condition~\eqref{eqBoundForSecondDerivative}, this in particular implies for every $M\in(0,\infty)$ that
\[ \int_X|\gamma(x,y,w)|\d y \le 2\int_X\gamma^+(x,y,w)\d y \le 2\int_X|\partial_w^2f(x,y,w,0)|\d y\le2\int_X\alpha_M(x,y)\d y \]
for almost all $x\in X$ and all $w\in\R$ with $|w|\le M$, where $\gamma^+$ denotes the positive part of the function $\gamma$.
Therefore, defining the function $g:X\times X\times\R\to\R$ by
\[ g(x,y,w) = \int_0^w\int_0^{\tilde w}\left(\gamma(x,y,\hat w)-\frac1{\L m(X)}\int_X\gamma(x,\tilde y,\hat w)\d\tilde y\right)\d\hat w\d\tilde w, \]
Fubini's theorem implies for almost all $x\in X$ and all $w\in\R$ that $y\mapsto g(x,y,w)$ is integrable and fulfils
\begin{equation}\label{eqGIntegratesToZero}
\int_Xg(x,y,w)\d y = 0.
\end{equation}
Moreover, we have by construction
\[ \partial_w^2g(x,y,w)\le\gamma(x,y,w)\le\partial_w^2f(x,y,w,z) \]
for almost all $x,y\in X$, almost all $w\in\R$, and all $z\in\R$. Therefore, the function $\tilde f:X\times X\times\R\times\R\to\R$ defined by
\[ \tilde f(x,y,w,z)=f(x,y,w,z)-g(x,y,w)-g(y,x,z) \]
fulfils that $\tilde f_{(x,y)}$ is for almost all $(x,y)\in X\times X$ separately convex.

Thus, it only remains to prove that the function $g$ satisfies a condition of the form~\eqref{eqUpperBoundForG}. We find with the property~\eqref{eqGIntegratesToZero} of~$g$ that
\[ \int_X|g(x,y,w)|\d y=2\int_Xg^+(x,y,w)\d y\le2\int_X\left|\int_0^w\int_0^{\tilde w}\partial_w^2f(x,y,\hat w,0)\d\hat w\d\tilde w\right|\d y \]
for almost all $x\in X$ and all $w\in\R$. Using now the $p$-regularity of $f$, we find for every $M\in(0,\infty)$ positive functions $\tilde\alpha_M\in L^{p^*}(X)\otimes L^1(X)$ and $\tilde\beta\in L^1(X)$ such that
\begin{align*}
\int_X|g(x,y,w)|\d y&\le2\int_X\left|\int_0^w|\partial_wf(x,y,\tilde w,0)-\partial_wf(x,y,0,0)|\d\tilde w\right|\d y\\
&\le \int_X\left|\int_0^w(\tilde\alpha_M(x,y)+\tilde\beta(y)\pp{p-1}M(\tilde w))\d\tilde w\right|\d y
\end{align*}
for almost all $x\in X$ and all $w\in\R$.
If $p=\infty$, we then immediately find
\[ \int_X|g(x,y,w)|\d y \le M\int_X\tilde\alpha_M(x,y)\d y+\pp\infty M(w) \]
for almost all $x\in X$ and all $w\in\R$, and if $p\in[1,\infty)$, we apply Youngs inequality to get 
\[ \int_X|g(x,y,w)|\d y \le \frac1{p^*}\left(\int_X\tilde\alpha_M(x,y)\d y\right)^{p^*}+\frac1p\left(1+\int_X\tilde\beta(y)\d y\right)\p(w) \]
for almost all $x\in X$ and all $w\in\R$.
\end{proof}

However, for functionals of vector-valued functions, this argumentation fails. We give a counterexample to illustrate the problematic. For simplicity, we waive the symmetry of the function $f$ and consider only the case $n=2$.

\begin{example}
Let $X=[-1,1]$, $n=2$, and $p\ge2$. We choose a non-negative, convex function $a\in C^2(\R)$ with $a(\zeta)=|\zeta|-1$ for all $\zeta\in\R\setminus(-2,2)$ and define the function $b\in C^2(\R)$ by
\[ b(\zeta) = \begin{cases}1+\zeta+\frac12\zeta^2&\text{if}\;\zeta\ge0,\\(1-\zeta+\frac12\zeta^2)^{-1}&\text{if}\;\zeta<0.\end{cases} \]
Moreover, we define the function $f:X\times X\times\R^2\times\R^2\to\R$ by
\[ f(x,y,w,z) = \begin{cases}\frac12(b(z_1)w_1^2+b(-z_1)w_2^2)&\text{if}\quad y\ge0,\\\frac12a(z_1)(w_1^2+w_2^2)+z_1w_1w_2&\text{if}\quad y<0.\end{cases} \]
Then we find for the Hessian matrix $\mathrm H\Phi_{x,\psi}$ of the function $\Phi_{x,\psi}$ defined in~\eqref{eqDefinitionOfPhi} for every $x\in X$ and $\psi\in L^p(X;\R^2)$ the expression
\begin{align*}
\mathrm H\Phi_{x,\psi}(w)&=\int_{-1}^1\mathrm H_wf(x,y,w,\psi(y))\d y \\
&=\int_{-1}^0\begin{pmatrix}a(\psi_1(y))&\psi_1(y)\\\psi_1(y)&a(\psi_1(y))\end{pmatrix}\d y+\int_0^1\begin{pmatrix}b(\psi_1(y))&0\\0&b(-\psi_1(y))\end{pmatrix}\d y.
\end{align*}
Since $a(\zeta)\ge|\zeta|-1$ for all $\zeta\in\R$, we have for all $\zeta,\tilde\zeta\in\R$ that
\begin{align*}
\det\begin{pmatrix}a(\zeta)+b(\tilde\zeta)&\zeta\\\zeta&a(\zeta)+b(-\tilde\zeta)\end{pmatrix}&=a(\zeta)^2+(b(\tilde\zeta)+b(-\tilde\zeta))a(\zeta)+1-\zeta^2 \\
&\ge(a(\zeta)+1)^2-\zeta^2\ge0.\phantom\int
\end{align*}
Therefore, we find for every $x\in X$ and every $\psi\in L^p(X;\R^2)$ that 
\[ \det\mathrm H\Phi_{x,\psi}(w)\ge 0\quad\text{for all}\quad w\in\R^2 \]
and thus that the function $\Phi_{x,\psi}$ is convex.

We now want to show that there does not exist a measurable function $g:X\times X\times\R^2\to\R$ such that $g_{(x,y)}:\R^2\to\R$, $w\mapsto g(x,y,w)$ is for almost all $(x,y)\in X\times X$ twice continuously differentiable, 
\begin{equation}\label{eqConditionForg}
\int_Xg(x,y,w)\d y=0\quad\text{for almost all}\quad x\in X\quad\text{and all}\quad w\in\R^2,
\end{equation}
and the map $\tilde f_{(x,y,z)}:\R^2\to\R$, $\tilde f_{(x,y,z)}(w)=f(x,y,w,z)+g(x,y,w)$ is for almost all $(x,y)\in X\times X$ and all $z\in\R^2$ convex. So, assume there exists such a function $g$. 
Then for almost all $(x,y)\in X\times[-1,0)$ and all $z\in\R^2$, the convexity of $\tilde f_{(x,y,z)}$ implies that
\[ \det\mathrm H\tilde f_{(x,y,z)}(w) = \det\begin{pmatrix}a(z_1)+\partial_{w_1}^2g_{(x,y)}(w)&z_1+\partial_{w_1}\partial_{w_2}g_{(x,y)}(w)\\z_1+\partial_{w_1}\partial_{w_2}g_{(x,y)}(w)&a(z_1)+\partial_{w_2}^2g_{(x,y)}(w)\end{pmatrix}\ge0 \]
for all $w\in\R^2$. Using that $a(\zeta)=|\zeta|-1$ for $|\zeta|\ge2$, we find that this is only possible if we have for almost all $(x,y)\in X\times[-1,0)$ and all $w\in\R^2$ that
\[ \partial_{w_1}^2g_{(x,y)}(w)+\partial_{w_2}^2g_{(x,y)}(w)\ge2. \]
Because of the condition~\eqref{eqConditionForg}, this implies that there exists for every $w\in\R^2$ an $i\in\{1,2\}$ and a set $A\subset X\times[0,1]$ with positive measure such that $\partial_{w_i}^2g_{(x,y)}(w)\le-1$ for all $(x,y)\in A$. But then for every $(x,y)\in A$ and every $z\in\R^2$ with $(-1)^iz_1>0$, the Hessian matrix
\[  \mathrm H\tilde f_{(x,y,z)}(w) = \begin{pmatrix}b(z_1)+\partial_{w_1}^2g_{(x,y)}(w)&\partial_{w_1}\partial_{w_2}g_{(x,y)}(w)\\\partial_{w_1}\partial_{w_2}g_{(x,y)}(w)&b(-z_1)+\partial_{w_2}^2g_{(x,y)}(w)\end{pmatrix} \]
is not positive semidefinite.
\end{example}

\appendix

\section{Some Technicalities}
We give here two missing technicial details to the proofs in the previous sections.

We begin with the statement that we can cover almost one forth of every set $E\subset X\times X$ by a set of the form $A\times A\c$ with some measurable set $A\subset X$. To be more precise, let us introduce for every $N\in\N$ the notation
\[ Q_a^N(x)=\prod_{j=1}^N(x_j-\tfrac a2,x_j+\tfrac a2) \]
for the cube in $\R^N$ with side length $a\in(0,\infty)$ and center $x\in\R^N$. In the space $\R^m$, we further define for every $\delta\in(0,\infty)$ the checkerboard pattern $S_\delta\subset\R^m$ by
\begin{equation}\label{eqCheckerboardTiling}
S_\delta = \bigcup_{\{x\in\Z^m\mid\sum_{j=1}^mx_j\in2\Z\}}Q_\delta^m(x\delta).
\end{equation}

\begin{lemma}\label{thCheckerboardApproximation}
Let $E\subset X\times X$ be a measurable set. Then there exists for every $\varepsilon\in(0,\infty)$ a $\delta_0\in(0,\infty)$ such that
\[ \L{2m}((S_\delta\times S_\delta\c)\cap E)\ge\left(\frac14-\varepsilon\right)\L{2m}(E)\quad\text{for all}\quad\delta\in(0,\delta_0), \]
where $S_\delta\c=\R^m\setminus S_\delta$.
\end{lemma}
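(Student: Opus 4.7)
The intuition is that $\chi_{S_\delta}$ converges weakly-star to the constant $\tfrac12$ in $L^\infty(\mathbb R^m)$ as $\delta\to0$, so $\chi_{S_\delta\times S_\delta\c}$ converges in an appropriate sense to $\tfrac14$ against sufficiently regular test sets; the lemma then follows by approximating the general measurable set $E$ by a finite union of product-cubes.

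The plan is to first establish a quantitative version of this weak-star convergence on cubes. For any cube $Q_a^m(x)\subset\R^m$ of side length $a$, the checkerboard pattern $S_\delta$ is invariant under translations by the lattice $2\delta\Z^m$ and satisfies $\L m(S_\delta\cap P)=\tfrac12\L m(P)$ for every cube $P$ that is a $2\delta$-grid cell. Partitioning $Q_a^m(x)$ by the grid of side $2\delta$ and bounding the contribution of the grid cells straddling the boundary, I get
\[ \bigl|\L m(S_\delta\cap Q_a^m(x))-\tfrac12\L m(Q_a^m(x))\bigr|\le C_m\delta\,a^{m-1}, \]
with $C_m$ depending only on $m$, and the same bound holds with $S_\delta\c$ in place of $S_\delta$. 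In particular, for any fixed cube $Q_a^m(x)$, both quantities converge to $\tfrac12\L m(Q_a^m(x))$ as $\delta\to0$.

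Next I would pass to the measurable set $E$ by standard approximation. By inner regularity of $\L{2m}$ on bounded measurable subsets of $\R^{2m}$, for every $\eta\in(0,\infty)$ there is a finite disjoint union $F=\bigsqcup_{i=1}^N Q^{2m}_{a_i}(\xi_i)$ of cubes in $\R^{2m}$ with $\L{2m}(E\triangle F)<\eta$. Each such cube factors as a product $Q_{a_i}^m(\xi_i')\times Q_{a_i}^m(\xi_i'')$ of two cubes in $\R^m$, so that by Fubini
\[ \L{2m}\!\bigl((S_\delta\times S_\delta\c)\cap(Q_{a_i}^m(\xi_i')\times Q_{a_i}^m(\xi_i''))\bigr)=\L m(S_\delta\cap Q_{a_i}^m(\xi_i'))\cdot\L m(S_\delta\c\cap Q_{a_i}^m(\xi_i'')). \]
By the quantitative estimate above, each factor differs from $\tfrac12\L m(Q_{a_i}^m(\cdot))$ by at most $C_m\delta\,a_i^{m-1}$, and since there are only finitely many cubes I can pick $\delta_0$ small enough so that summing over $i$ gives
\[ \L{2m}((S_\delta\times S_\delta\c)\cap F)\ge\tfrac14\L{2m}(F)-\eta\quad\text{for all}\quad\delta\in(0,\delta_0). \]

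Finally I would combine the two approximations: replacing $F$ by $E$ in the previous display introduces an error of at most $\L{2m}(E\triangle F)<\eta$, so
\[ \L{2m}((S_\delta\times S_\delta\c)\cap E)\ge\tfrac14\L{2m}(E)-3\eta. \]
Choosing $\eta=\varepsilon\L{2m}(E)/3$ (which I may assume nonzero; otherwise the claim is trivial) yields the desired bound. The only real obstacle is keeping the approximation errors under control, which is handled by the fact that the cube approximation involves only finitely many pieces so that $\delta_0$ can be chosen uniformly.
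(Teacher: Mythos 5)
Your proof is correct and takes essentially the same approach as the paper's: approximate $E$ by a disjoint union of cubes in $\R^{2m}$ and use, via Fubini, that $S_\delta\times S_\delta\c$ fills about one fourth of each cube once $\delta$ is small relative to its side length. The paper covers $E$ from outside by countably many disjoint cubes and discards those below a side-length cutoff $\gamma$, whereas you take a finite symmetric-difference approximation and make the per-factor error $O(\delta a^{m-1})$ explicit, but these are cosmetic variations of the same argument.
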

\begin{proof}
Let $\varepsilon\in(0,\infty)$ be arbitrarily given.
Since $E$ is a measurable set, we can cover it with pairwise disjoint cubes $Q_{a_i}^{2m}(\xi_i)$, $a_i\in\Q\cap(0,\infty)$, $\xi_i\in\Q^{2m}$, $i\in\N$, such that
\[ \L{2m}({\textstyle\bigsqcup_{i\in\N}}Q_{a_i}^{2m}(\xi_i)\setminus E)\le\frac\varepsilon4\L{2m}(E). \]
We further choose $\gamma\in(0,\infty)$ such that
\[ \L{2m}({\textstyle\bigsqcup_{\{i\in\N\mid a_i\le\gamma\}}}Q_{a_i}^{2m}(\xi_i))\le\varepsilon\L{2m}(E). \]

Since the set $S_\delta\times S_\delta\c$ covers for every $\delta\in(0,\infty)$ exactly one forth of every cube whose side length is an integer multiple of $2\delta$, we have for every $\delta\in(0,\infty)$, $a\in(2\delta,\infty)$, and $\xi\in\R^{2m}$ that
\[ \L{2m}((S_\delta\times S_\delta\c)\cap Q_a^{2m}(\xi))\ge\frac{(a-2\delta)^{2m}}4=\frac{(1-\frac{2\delta}a)^{2m}}4\L{2m}(Q_a^{2m}(\xi)). \]
So, with $\delta_0=(1-(1-2\varepsilon)^{\frac1{2m}})\frac\gamma2$, we get for every $\delta\in(0,\delta_0)$ that
\[ \L{2m}((S_\delta\times S_\delta\c)\cap Q_a^{2m}(\xi))\ge\left(\frac14-\frac\varepsilon2\right)\L{2m}(Q_a^{2m}(\xi)) \]
for every cube $Q_a^{2m}(\xi)$ with side length $a\in(\gamma,\infty)$ and arbitrary center $\xi\in\R^{2m}$, and therefore,
\begin{align*}
\L{2m}((S_\delta\times S_\delta\c)\cap E) &\ge \L{2m}((S_\delta\times S_\delta\c)\cap{\textstyle\bigsqcup_{\{i\in\N\mid a_i>\gamma\}}}Q_{a_i}^{2m}(\xi_i))-\frac\varepsilon4\L{2m}(E) \\
&\ge\left(\frac14-\frac\varepsilon2\right)\L{2m}({\textstyle\bigsqcup_{\{i\in\N\mid a_i>\gamma\}}}Q_{a_i}^{2m}(\xi_i))-\frac\varepsilon4\L{2m}(E) \\
&\ge\left(\frac14-\frac\varepsilon2\right)(\L{2m}(E)-\varepsilon\L{2m}(E))-\frac\varepsilon4\L{2m}(E) \\
&\ge\left(\frac14-\varepsilon\right)\L{2m}(E),
\end{align*}
as desired.
\end{proof}

In particular, this result shows that we can also choose for finitely many measurable sets $E_j\subset X\times X$, $j\in[1,N]\cap\N$, $N\in\N$, and arbitrary $\varepsilon\in(0,\infty)$ a $\delta\in(0,\infty)$ such that the set $S_\delta\times S_\delta\c$ fulfils 
\[ \L{2m}((S_\delta\times S_\delta\c)\cap E_j)\ge\left(\frac14-\varepsilon\right)\L{2m}(E_j) \]
for all $j\in[1,N]\cap\N$.

\medskip

The second lemma slightly generalises the result that if a measurable function $g:X\times\R^n\to\R$ fulfils an integral inequality of the form $\int_Ag(x,\omega(x))\d x\ge0$ for all sets $A\subset X$ and all $\omega\in L^\infty(X;\R^n)$, then $g(x,w)\ge0$ for almost every $x\in X$ and for all $w\in\R^n$.

\begin{lemma}\label{thFromIntegralToPointwise}
Let $g:X\times\R^n\to\R$ be a function such that the map $\R^n\to\R$, $w\mapsto g(x,w)$ is continuous for almost all $x\in X$, and such that there exists for every $M\in(0,\infty)$ a function $\alpha_M\in L^1(X)$ with $|g(x,w)|\le\alpha_M(x)$ for almost all $x\in X$ and all $w\in[-M,M]$.

If there exists for every subset $E\subset X$ with positive measure and every $M\in(0,\infty)$ a measurable subset $E'_M\subset E$ with positive measure such that
\[ \int_Ag(x,\omega(x))\d x\ge0 \]
for all measurable sets $A\subset E'_M$ and all $\omega\in L^\infty(X;\R^n)$ with $\|\omega\|_\infty\le M$, then
\[ g(x,w)\ge0 \]
for almost all $x\in X$ and all $w\in\R^n$.
\end{lemma}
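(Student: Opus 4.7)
\medskip\noindent\textbf{Proof sketch.} The plan is to argue by contrapositive: assume the pointwise conclusion fails, then find a constant test function and a subset of the form $E'_M$ on which the integral inequality is violated.

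First I would use a countability argument to reduce to rational parameters. Let $N_0\subset X$ be the null set outside of which $w\mapsto g(x,w)$ is continuous. For every $w_0\in\Q^n$, set $N(w_0)=\{x\in X\mid g(x,w_0)<0\}$, and let $N=N_0\cup\bigcup_{w_0\in\Q^n}N(w_0)$. If each $N(w_0)$ were a null set, then so would $N$ be, as a countable union of null sets; for any $x\in X\setminus N$, the continuous function $w\mapsto g(x,w)$ would be nonnegative on the dense set $\Q^n$ and hence everywhere on $\R^n$, proving the lemma. So if the claim fails, some $w_0\in\Q^n$ gives $\L m(N(w_0))>0$. Decomposing $N(w_0)=\bigcup_{k\in\N}\{x\in X\mid g(x,w_0)\le-1/k\}$, one of the sets in this countable union must have positive measure, so there exist $\varepsilon\in(0,\infty)$ and a measurable $E\subset X$ with $\L m(E)>0$ such that $g(x,w_0)\le-\varepsilon$ for all $x\in E$.

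Next I would invoke the hypothesis with this $E$ and the constant $M=\max\{|w_0|,1\}$ to obtain a measurable subset $E'_M\subset E$ of positive measure such that $\int_Ag(x,\omega(x))\d x\ge0$ for every measurable $A\subset E'_M$ and every $\omega\in L^\infty(X;\R^n)$ with $\|\omega\|_\infty\le M$. I then test this inequality with $A=E'_M$ and the constant function $\omega\equiv w_0$, which satisfies $\|\omega\|_\infty=|w_0|\le M$. Since $E'_M\subset E$, the integrand equals $g(x,w_0)\le-\varepsilon$ throughout $E'_M$, so that
\[ \int_{E'_M}g(x,w_0)\d x\le-\varepsilon\,\L m(E'_M)<0, \]
contradicting the hypothesis. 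The integrability bound $|g(x,w)|\le\alpha_M(x)$ ensures that this integral is well-defined and finite, so the strict negativity above is genuine.

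The only nontrivial ingredient is the reduction-to-rationals argument, which hinges on continuity in $w$ for almost every $x$; everything after that step is an essentially one-line test-function argument, so I expect no serious obstacle once the null-set bookkeeping is arranged as above.
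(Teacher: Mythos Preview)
Your argument is correct and follows the same overall contrapositive strategy as the paper's proof: reduce to rational parameters via continuity, locate a set of positive measure where $g$ is uniformly negative, invoke the hypothesis to obtain a subset $E'_M$, and produce a test function violating the integral inequality. The difference lies in the order of the reduction and, consequently, in the test function used. The paper defines $E_{k,M}=\bigcup_{w\in\Q^n,\,|w|\le M}\{x:g(x,w)\le-1/k\}$, takes $E'_{k,M}$ inside this union, and then must invoke Aumann's measurable selection theorem to build a measurable $\omega$ with $g(x,\omega(x))\le-\tfrac1{2k}$ on $E'_{k,M}$, since different points $x$ may require different witnesses $w$. You instead fix a single rational $w_0$ first, so that your set $E$ already satisfies $g(x,w_0)\le-\varepsilon$ uniformly, and a constant test function $\omega\equiv w_0$ suffices. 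This is a genuine simplification: it removes the appeal to measurable selection entirely while costing nothing, since the countability of $\Q^n$ guarantees that some single $w_0$ works.
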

\begin{proof}
We define for every $k\in\N$ and $M\in(0,\infty)$ the measurable set
\[ E_{k,M} =\bigcup_{\{w\in\Q^n\mid|w|\le M\}}\{x\in X\mid g(x,w)\le-\tfrac1k\}. \]
Let us assume by contradiction that there exists a set $E\subset X$ with positive measure such that we find for every $x\in E$ a value $w\in\R^n$ with $g(x,w)<0$. Then the union $\bigcup_{k,M\in\N}E_{k,M}\supset E$ has positive measure, too. We thus find some $k,M\in\N$ with $\L m(E_{k,M})>0$.

Now, by assumption, there exists a set $E_{k,M}'\subset E_{k,M}$ with positive measure such that
\begin{equation}\label{eqIntegralInequality}
\int_Ag(x,\omega(x))\d x\ge0
\end{equation}
for all measurable sets $A\subset E_{k,M}'$ and all $\omega\in L^\infty(X;\R^n)$ with $\|\omega\|_\infty\le M$.

On the other hand, using Aumann's measurable selection theorem, we can find a function $\omega\in L^\infty(E_{k,M}';\R^n)$ such that $\|\omega\|_\infty\le M$ and $g(x,\omega(x))\le-\tfrac1{2k}$ for almost all $x\in E_{k,M}'$, which clearly contradicts~\eqref{eqIntegralInequality}.
\end{proof}

\bibliography{citations}
\bibliographystyle{plain}
\end{document}